\numberwithin{equation}{section}
\numberwithin{figure}{section}
\theoremstyle{plain}
\newtheorem{thm}{\protect\theoremname}
\theoremstyle{remark}
\newtheorem{rem}[thm]{\protect\remarkname}
\theoremstyle{plain}
\newtheorem{lem}[thm]{\protect\lemmaname}
\theoremstyle{definition}
\newtheorem{example}[thm]{\protect\examplename}
\theoremstyle{plain}
\newtheorem{prop}[thm]{\protect\propositionname}
\theoremstyle{plain}
\newtheorem{cor}[thm]{\protect\corollaryname}
\theoremstyle{remark}
\newtheorem*{rem*}{\protect\remarkname}
\providecommand{\corollaryname}{Corollary}
\providecommand{\examplename}{Example}
\providecommand{\lemmaname}{Lemma}
\providecommand{\propositionname}{Proposition}
\providecommand{\remarkname}{Remark}
\providecommand{\theoremname}{Theorem}
\begin{document}

\title{Explicit Nikulin configurations on Kummer surfaces}

\addtolength{\textwidth}{3mm}
\addtolength{\hoffset}{3mm} 
\addtolength{\textheight}{5mm}
\addtolength{\voffset}{5mm} 


\global\long\def\CC{\mathbb{C}}%
 
\global\long\def\BB{\mathbb{B}}%
 
\global\long\def\PP{\mathbb{P}}%
 
\global\long\def\QQ{\mathbb{Q}}%
 
\global\long\def\RR{\mathbb{R}}%
 
\global\long\def\FF{\mathbb{F}}%

\global\long\def\DD{\mathbb{D}}%
 
\global\long\def\NN{\mathbb{N}}%
\global\long\def\ZZ{\mathbb{Z}}%
 
\global\long\def\HH{\mathbb{H}}%
 
\global\long\def\Gal{{\rm Gal}}%
\global\long\def\OO{\mathcal{O}}%
\global\long\def\pP{\mathfrak{p}}%

\global\long\def\pPP{\mathfrak{P}}%
 
\global\long\def\qQ{\mathfrak{q}}%
 
\global\long\def\mm{\mathcal{M}}%
 
\global\long\def\aaa{\mathfrak{a}}%
 
\global\long\def\a{\alpha}%
 
\global\long\def\b{\beta}%
 
\global\long\def\d{\delta}%
 
\global\long\def\D{\Delta}%
 
\global\long\def\L{\Lambda}%
 
\global\long\def\g{\gamma}%

\global\long\def\G{\Gamma}%
 
\global\long\def\d{\delta}%
 
\global\long\def\D{\Delta}%
 
\global\long\def\e{\varepsilon}%
 
\global\long\def\k{\kappa}%
 
\global\long\def\l{\lambda}%
 
\global\long\def\m{\mu}%
 
\global\long\def\o{\omega}%
 
\global\long\def\p{\pi}%
 
\global\long\def\P{\Pi}%
 
\global\long\def\s{\sigma}%

\global\long\def\S{\Sigma}%
 
\global\long\def\t{\theta}%
 
\global\long\def\T{\Theta}%
 
\global\long\def\f{\varphi}%
 
\global\long\def\deg{{\rm deg}}%
 
\global\long\def\det{{\rm det}}%

\global\long\def\Dem{Proof: }%
 
\global\long\def\ker{{\rm Ker\,}}%
 
\global\long\def\im{{\rm Im\,}}%
 
\global\long\def\rk{{\rm rk\,}}%
 
\global\long\def\car{{\rm car}}%
\global\long\def\fix{{\rm Fix( }}%

\global\long\def\card{{\rm Card\  }}%
 
\global\long\def\codim{{\rm codim\,}}%
 
\global\long\def\coker{{\rm Coker\,}}%
 
\global\long\def\mod{{\rm mod }}%

\global\long\def\pgcd{{\rm pgcd}}%
 
\global\long\def\ppcm{{\rm ppcm}}%
 
\global\long\def\la{\langle}%
 
\global\long\def\ra{\rangle}%

\global\long\def\Alb{{\rm Alb(}}%
 
\global\long\def\Jac{{\rm Jac(}}%
 
\global\long\def\Disc{{\rm Disc(}}%
 
\global\long\def\Tr{{\rm Tr(}}%
 
\global\long\def\NS{{\rm NS(}}%
 
\global\long\def\Pic{{\rm Pic(}}%
 
\global\long\def\Pr{{\rm Pr}}%

\global\long\def\Km{{\rm Km(}}%
\global\long\def\rk{{\rm rk(}}%
\global\long\def\Hom{{\rm Hom(}}%
 
\global\long\def\End{{\rm End(}}%
 
\global\long\def\aut{{\rm Aut}}%
 
\global\long\def\SSm{{\rm S}}%
\global\long\def\psl{{\rm PSL}}%

\global\long\def\diag{{\rm Diag}}%

\subjclass[2000]{Primary: 14J28 ; Secondary: 14J50, 14J29, 14J10}
\keywords{Kummer surfaces, Nikulin configurations}
\author{Xavier Roulleau, Alessandra Sarti}
\begin{abstract}
A Nikulin configuration is the data of $16$ disjoint smooth rational
curves on a K3 surface. According to results of Nikulin, the existence
of a Nikulin configuration means that the K3 surface is a Kummer surface,
moreover the abelian surface from the Kummer structure is determined
by the $16$ curves. A classical question of Shioda is about the existence
of non isomorphic Kummer structures on the same Kummer K3 surface.
The question was studied by several authors, and it was shown that
the number of non-isomorphic Kummer structures is finite, but no explicit
geometric construction of such structures was given. In the paper
\cite{RS}, we constructed explicitly non isomorphic Kummer structures
on some Kummer surfaces. In this paper we generalize the construction
to Kummer surfaces with a weaker restriction on the degree of the
polarization and we describe some cases where the previous construction
does not work. 
\end{abstract}

\maketitle

\section{Introduction}

A (projective, as always in this paper) Kummer surface is obtained
as the desingularization of the quotient of an abelian surface by
an involution with $16$ isolated fixed points. It is well known that
Kummer surfaces are K3 surfaces and that their Picard number is at
least $17$, the rank $17$ sub-group being generated by the $16$
rational curves in the resolution of the $16$ nodes and by the polarization.
In \cite{Nikulin}, Nikulin showed the converse, i.e. that a K3 surface
containing $16$ disjoint smooth rational curves, or $(-2)$-curves,
is the Kummer surface associated to an abelian surface. Let $X$ be
a K3 surface; we call a {\it Kummer structure} on $X$ an abelian
surface $A$ (up to isomorphism) such that $X\simeq Km(A)$, and we
call a {\it Nikulin configuration} a set of $16$ disjoint smooth
rational curves on $X$. By the result of Nikulin we have a bijection:
$$ \{\mbox{Kummer\,\,structures}\}
\longleftrightarrow \{\mbox{Nikulin\,\,configurations}\}/\mbox{Aut(X)} $$ 

In 1977, see \cite[Question 5]{Shioda}, T. Shioda raised the following
question : 

Is it possible to have non-isomorphic abelian surfaces $A$ and $B$,
such that $Km(A)$ and $Km(B)$ are isomorphic? 

Shioda and Mitani in \cite[Theorem 5.1]{MS} answer negatively the
question if $\rho(Km(A))=20$, where $\rho(Km(A))$ is the Picard
number of $Km(A)$, i.e. the rank of the Néron-Severi group of $Km(A)$.
The answer is also negative if $A$ is a generic principally polarized
abelian surface, i.e. $A$ is the jacobian of a curve of genus $2$
and $\rho(A)=1$. Then in \cite[Theorem 1.5]{GH}, Gritsenko and Hulek
answered positively the question. They showed that if $A$ is a generic
$(1,t)$-polarized abelian surface with $t>1$ then the abelian surface
$A$ and its dual $\hat{A}$, though not isomorphic, satisfy $Km(A)\cong Km(\hat{A})$.
In \cite[Theorem 0.1]{HLOY}, Hosono, Lian, Oguiso and Yau, by using
lattice theory, showed that the number of Kummer structures is finite
and for each integer $N\in\mathbb{N}^{*}$, they construct a Kummer
surface of Picard number $18$ with at least $N$ Kummer structures. 

In \cite[Example 4.16]{Orlov}, Orlov showed that if $A$ is a generic
abelian surface (i.e. $\rho(Km(A))=17$) then the number of abelian
surfaces (up to isomorphism) with equivalent bounded derived categories
is $2^{\nu}$, where $\nu$ is the number of prime divisors of $\frac{1}{2}M^{2}$,
for $M$ an ample generator of the Néron-Severi group of $A$. By
\cite[Theorem 0.1]{HLOY}, there is a one-to-one correspondence between
these equivalent bounded derived categories of $A$ and the Kummer
structures on the Kummer surface $\Km A)$ associated to $A$. Thus,
for example if $A$ is principally polarized we have that $M^{2}=2$
so that $\nu=0$ and we find again the fact that in this case there
is only one Kummer structure on $\Km A)$. Observe that $\nu$ can
be also defined as the number of prime divisors of $\frac{1}{4}L^{2}$,
where $L$ is the polarization induced by $M$ on $Km(A)$, (in particular
$L$ is orthogonal to the $16$ rational curves ; it is easy to see
that by changing the $16$ rational disjoint curves, the number $\nu$
does not change).

In \cite[Theorem 1]{RS}, we constructed explicit examples of two
Nikulin configurations $\mathcal{C}$, $\mathcal{C}'$ on some K3
surface $X$ such that the abelian surfaces $A$ and $A'$ associated
to these two configurations are not isomorphic. This was the first
geometric construction of two distinct Kummer structures. These examples
are for generic Kummer surfaces, such that the orthogonal complement
of the $16$ rational curves in $\mathcal{C}$ is generated by a class
$L$ such that $L^{2}=2k(k+1)$ for some integer $k$ (we give a motivation
for this restriction in the Appendix of this paper). 

The main goal of this paper is to provide a generalization of that
result to other Kummer surfaces. For that aim, let $t\in\NN^{*}$
be an integer and let $X$ be a general Kummer surface with a Nikulin
configuration $\mathcal{C}$ such that the orthogonal complement of
the $16$ $(-2)$-curves $A_{1},\dots,A_{16}$ in $\mathcal{C}$ is
generated by $L$ with $L^{2}=4t$. A class $C$ of the form $C=\b L-\a A_{1}$
with $\b\in\NN^{*}$ has self-intersection $C^{2}$ equals to $-2$
if and only if the coefficients $(\a,\b)$ satisfy the Pell-Fermat
equation $\a^{2}-2t\b^{2}=1.$ There is a non-trivial solution if
and only if $2t$ is not a square. Let us suppose that this is the
case. Then there exists a so-called fundamental solution which we
denote by $(\a_{0},\b_{0})$. Our main result is as follows:
\begin{thm}
\label{thm:MAIN}Suppose that $\b_{0}$ is even. Then $\b_{0}L-\a_{0}A_{1}$
is the class of an irreducible $(-2)$-curve $A_{1}'$, which curve
is disjoint from $A_{2},\dots,A_{16}$.\\
The Nikulin configurations $\mathcal{C}=\sum_{i=1}^{16}A_{i}$ and
$\mathcal{C}'=A_{1}'+\sum_{i=2}^{16}A_{i}$ define the same Kummer
structure on the Kummer surface $X$ if and only if the negative Pell-Fermat
equation $\a^{2}-2t\b^{2}=-1$ has a solution.\\
Suppose that this is the case. Then there exists a double cover map
$X\to\PP^{2}$ branched over $6$ lines $L_{1},\dots,L_{6}$, contracting
the $15$ $(-2)$-curves $A_{j},j\geq2$ to the singularities of $\sum_{i=1}^{6}L_{i}$,
and such that the induced involution exchanges the curves $A_{1},A_{1}'$
and therefore the configurations $\mathcal{C},\mathcal{C}'$. 
\end{thm}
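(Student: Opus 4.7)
The plan is to treat the three assertions of the theorem in order. For the existence and disjointness of $A_1'$, I set $C=\b_0 L-\a_0 A_1$, which by the Pell--Fermat identity satisfies $C^{2}=4t\b_0^{2}-2\a_0^{2}=-2$. The class $L$ is nef (being the pullback of an ample class from the abelian surface and orthogonal to each $A_i$), so $C\cdot L=4t\b_0>0$ while $-C\cdot L<0$ forces $-C$ to be non-effective; Riemann--Roch then makes $C$ effective. The intersections $C\cdot A_j=\b_0(L\cdot A_j)-\a_0(A_1\cdot A_j)=0$ for $j\geq 2$ give the disjointness. For irreducibility I would argue that any nontrivial decomposition $C=\sum n_iD_i$ would include an irreducible component whose class, written as $\b'L-\a'A_1+\cdots$, is itself a $(-2)$-curve; analysing the lattice structure yields a positive solution $(\a',\b')$ of $\a^{2}-2t\b^{2}=1$ with $\b'<\b_0$, contradicting fundamentality of $(\a_0,\b_0)$.

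For the ``if'' direction of the equivalence, assume the negative Pell--Fermat equation admits a positive integer solution $(\a_1,\b_1)$ and set $H=\b_1 L-\a_1 A_1$. A direct computation using $\a_1^{2}=2t\b_1^{2}-1$ yields $H^{2}=2$, $H\cdot A_j=0$ for $j\geq 2$, and $H\cdot A_1=2\a_1>0$. After verifying that $H$ is nef (again by minimality of $(\a_1,\b_1)$) and base-point free, the linear system $|H|$ gives a degree-$2$ morphism $\f_H\colon X\to\PP^{2}$ contracting $A_2,\dots,A_{16}$ to the singular points of its branch $B\subset\PP^{2}$. Since $B$ has degree $6$ and exactly $15$ nodes, writing $B=\bigcup B_i$ with $\deg B_i=d_i$ and $\sum d_i=6$, the node count $\sum_{i<j}d_id_j=15$ forces $\sum d_i^{2}=6$, hence $d_i=1$ for all $i$: $B$ is a union of $6$ lines in general position. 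The deck involution $\s$ fixes $H$ and each $A_j$ with $j\geq 2$; the projection formula $\s^{*}A_1+A_1=\f_H^{*}\f_H(A_1)=2\a_1 H$ (with $\f_H(A_1)$ of degree $A_1\cdot H=2\a_1$) gives $\s^{*}A_1=2\a_1 H-A_1$. Using the identity $(\a_1+\b_1\sqrt{2t})^{2}=(2\a_1^{2}+1)+2\a_1\b_1\sqrt{2t}$, which yields $(\a_0,\b_0)=(2\a_1^{2}+1,2\a_1\b_1)$, this simplifies to $\b_0 L-\a_0 A_1=A_1'$, so $\s$ swaps the two configurations.

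For the converse direction, suppose $\f\in\aut(X)$ carries $\mathcal{C}$ to $\mathcal{C}'$; then $\f^{*}$ sends $\mathcal{C}^{\perp}=\la L\ra$ to $(\mathcal{C}')^{\perp}\cap\la L,A_1\ra=\la L'\ra$, where $L'=\a_0 L-2t\b_0 A_1$ is primitive (thanks to $\gcd(\a_0,2t)=1$, which follows from $\a_0^{2}\equiv 1\pmod{2t}$) and has square $4t$. After composing with reflections in $A_2,\dots,A_{16}$ (which fix $\la L,A_1\ra$), the induced order-$2$ action sends $L\mapsto L'$, $A_1\mapsto A_1'$; its $(+1)$-eigenspace is spanned by $(1+\a_0)L-2t\b_0 A_1$, and the existence of a primitive integral class of square $2$ inside that rational eigenspace is equivalent to the solvability of $\a^{2}-2t\b^{2}=-1$. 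The Torelli theorem then realises this lattice involution geometrically, closing the equivalence. The main obstacle I anticipate is the irreducibility argument in step~1 together with the nefness and base-point-freeness of $H$ in step~2, both requiring precise control of every $(-2)$-class in $\NS X)$ via the Pell--Fermat structure; the converse in step~3 is conceptually clean but still depends on the delicate lattice-theoretic lifting.
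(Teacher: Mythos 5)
Your middle step (the ``if'' direction together with the last assertion) is essentially the paper's own argument: your $H=\b_{1}L-\a_{1}A_{1}$ is exactly the divisor $D'=e_{0}L-d_{0}A_{1}$ of square $2$ used there, and the chain nef $\Rightarrow$ base point free $\Rightarrow$ double plane with a $15$-nodal sextic branch, which must split into six lines, followed by $\s^{*}A_{1}=2\a_{1}H-A_{1}=\b_{0}L-\a_{0}A_{1}$, is the content of Proposition \ref{prop:DoublePlaneModel}. The two other parts, however, have genuine gaps. For irreducibility of $A_{1}'$ your argument never uses the hypothesis that $\b_{0}$ is even, yet the statement is false when $\b_{0}$ is odd: for $t=4$ one has $A_{1}'=L-3A_{1}=2A_{1}''+A_{2}+A_{3}+A_{4}$ with $A_{1}''=\frac{1}{2}(L-3A_{1}-A_{2}-A_{3}-A_{4})$, an effective class orthogonal to $A_{2},\dots,A_{16}$ which is reducible. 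The flaw in your sketch is that an irreducible component of a decomposition of $A_{1}'$ need not lie in $\la L,A_{1}\ra$ nor have integral coefficients: since $\NS X)$ is an index-two overlattice of $\ZZ L\oplus K$, components may involve the other $A_{j}$ with half-integer coefficients, and such a component yields no solution of $\a^{2}-2t\b^{2}=1$, hence no contradiction with fundamentality. The paper instead introduces $L'=\a_{0}L-2t\b_{0}A_{1}$, shows it is big and nef and that the only $(-2)$-classes it contracts are $A_{1}',A_{2},\dots,A_{16}$ --- the parity of $\b_{0}$ is used precisely to exclude classes of the form $\frac{1}{2}(\b_{0}L-\a_{0}A_{1}-A_{i}-A_{j}-A_{k})$ via Lemma \ref{lem:At most 4 beta} --- and then contracts with $3L'$, the images being nodes because $\rho(X)=17$.

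The second gap is the direction ``negative Pell--Fermat not solvable $\Rightarrow$ distinct Kummer structures'', which is the hard part of the theorem. An automorphism $\f$ carrying $\mathcal{C}$ to $\mathcal{C}'$ has a priori unknown (possibly infinite) order; composing $\f^{*}$ with reflections in the classes $A_{2},\dots,A_{16}$ produces only a lattice isometry, not an automorphism of $X$, so your ``induced order-$2$ action'' is not available, and the Torelli theorem cannot be invoked to move between lattice involutions and automorphisms without controlling the action on the transcendental lattice and on the ample cone. Moreover the key claim --- that the $(+1)$-eigenspace of this action must contain a primitive integral class of square $2$, whence solvability of $\a^{2}-2t\b^{2}=-1$ --- is never justified. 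In the paper this is exactly where the work lies: one first shows (Lemma \ref{lem:treize}) that $f$ swaps $A_{1}$ and $A_{1}'$, then that $f$ has finite order $2^{m}$ and reduces to an involution, rules out the symplectic case by a trace computation, and in the non-symplectic case uses $2$-elementarity of the fixed lattice to force $\frac{\a_{0}-1}{2d_{0}^{2}}\in\{1,2\}$; only the value $1$ corresponds to your square-$2$ criterion, while the value $2$ requires the separate $15$-nodal quartic model of Proposition \ref{prop:Degre4ModelWith15Nodes} and a geometric argument imported from \cite{RS}. Your proposal detects at best the first sub-case and offers no substitute for the finite-order, symplectic/non-symplectic, and residual-case analysis, so this direction is not proved.
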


The integers $t\in\NN$ such that $\b_{0}$ is even have density at
least $\frac{3}{4}$ ; among these integers, at least $\frac{2}{3}$
are such that the negative Pell-Fermat equation have no solution,
and therefore give examples of two distinct Kummer structures (see
Remark \ref{rem:DensityAtLeast1/2} for a precise meaning of that
affirmation, and also the table in the Appendix). As a by-product
of our study, let us mention the following result (see Proposition
\ref{prop:Degre4ModelWith15Nodes}), which we believe can be of independent
interest: \textit{Suppose that the equation $2\mu^{2}-t\nu^{2}=-1$
has a solution. Then there exists a model of the K3 surface $X$ as
a quartic surface in $\PP^{3}$ with $15$ nodes. }

One could also raise a weaker question than Shioda's question by asking
if $Km(A)\cong Km(B)$ implies $A$ and $B$ must be isogenous ? The
answer is positive and the result was surely known, but we could not
find an explicit proof in the literature, hence we recall it in Section
\ref{sec:Generalizations-for-other} and we show how it can be obtained
as a direct consequence of a result of Stellari \cite[Theorem 1.2]{Stellari}.
In the rest of the paper, we point out why the construction in Theorem
\ref{thm:MAIN} can not work for $\beta_{0}$ odd, moreover we study
examples of Nikulin configurations in the case that $\beta_{0}$ is
odd or $2t$ is a square.

\textbf{Acknowledgements:} We thank P. Stellari for pointing out his
paper \cite{Stellari}. We also thank K. Hulek, H. Lange, K. Oguiso,
M. Ramponi, J. Rivat and T. Shioda for useful discussions. We are
very grateful to the referee for the many questions, remarks and comments
that improved substantially this article. 

\section{\label{sec:Generalizations-for-other}Construction of Nikulin configurations}

\subsection{\label{subsec:Cases-when-not square}Preliminaries: the Pell-Fermat
equation and its negative}

The aim of this first sub-section is to recall results on Pell-Fermat
equations. We give various criteria when the fundamental solution
$(\a_{0},\b_{0})$ of it is such that $\b_{0}$ is even, and when
the negative Pell-Fermat solution has no solution.

\subsubsection{The Pell-Fermat equation}

For $t\in\NN^{*}$, the Pell-Fermat equation 
\begin{equation}
\a^{2}-2t\b^{2}=1\label{eq:Pell Fermat 2t ok}
\end{equation}
has a non-trivial solution $(\a,\b)\in\ZZ^{2}$ if and only if $2t$
is not a square. Then there exists a fundamental solution $(\a_{0},\b_{0})\in\NN$,
such that for every other solution $(\a,\b)$, there exists $k\in\ZZ$
with $\a+\b\sqrt{2t}=\pm(\a_{0}+\sqrt{2t}\b_{0})^{k}$. 
\begin{rem}
\label{rem:pell-fermat-tricks}For $k\in\ZZ$, let $(x_{k},y_{k})\in\ZZ^{2}$
be such that 
\[
x_{k}+y_{k}\sqrt{2t}=(\a_{0}+\sqrt{2t}\b_{0})^{k}.
\]
Using that $\a_{0}\geq1,\b_{0}\geq1$ and an induction, one can check
that the sequences $(x_{k})_{k\in\NN},\,(y_{k})_{k\in\NN}$ are strictly
increasing with $k\in\NN$. Therefore, if $(\a,\b)\in\NN^{2}$ is
a solution different from $(1,0)$ and with $\a\leq\alpha_{0}$ or
$\b\leq\b_{0}$, then $(\a,\b)$ is the fundamental solution. \\
We observe moreover that for a solution $(\a,\b)$ of equation \eqref{eq:Pell Fermat 2t ok},
the integer $\a$ is necessarily odd.
\end{rem}

For $t$ a positive integer such that $2t$ is not a square, we denote
by $(\a_{0},\b_{0})$ the fundamental solution of $\a^{2}-2t\b^{2}=1$.
Part a) of the following Lemma shows that the density of integers
$t$ such that $\b_{0}$ is even is at least $\frac{3}{4}$ (we thank
Joël Rivat for useful discussions on that question, and also Lemma
\ref{lem:Rivat2} below):
\begin{lem}
\label{lem-Rivat}a) Suppose that $t\neq0\text{ mod }4$. Then $\b_{0}$
is even.\\
b) There is an infinite number of integers $s$ such that  the fundamental
solution $(\a_{0},\b_{0})$ of $\a^{2}-8s^{2}\b^{2}=1$ has odd $\b_{0}$.
\\
c) There is an infinite number of integers $s$ such that the fundamental
solution $(\a_{0},\b_{0})$ of $\a^{2}-8s^{2}\b^{2}=1$ has even $\b_{0}$. 
\end{lem}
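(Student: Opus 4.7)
The plan is to treat the three parts in turn, using Remark~\ref{rem:pell-fermat-tricks} throughout to certify a fundamental solution from a small explicit pair.

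For \emph{part~(a)} I would invoke a mod-$8$ congruence: since $\alpha_{0}$ is odd by that remark, $\alpha_{0}^{2}\equiv 1\pmod{8}$, so the relation $\alpha_{0}^{2}=1+2t\beta_{0}^{2}$ gives $t\beta_{0}^{2}\equiv 0\pmod{4}$; if $\beta_{0}$ were odd then $\beta_{0}^{2}\equiv 1\pmod 8$ and we would be forced to $t\equiv 0\pmod 4$, contradicting the hypothesis.

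For \emph{part~(b)} I would exploit that $\alpha^{2}-8s^{2}=1$ is itself a Pell-Fermat equation in the unknowns $(\alpha,s)$, with fundamental solution $(3,1)$ and hence infinitely many positive integer solutions $(M,s)$. For each such $s$ the pair $(M,1)$ solves $\alpha^{2}-8s^{2}\beta^{2}=1$ with $\beta=1$, and Remark~\ref{rem:pell-fermat-tricks} immediately identifies it as the fundamental solution, so $\beta_{0}=1$ is odd.

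For \emph{part~(c)} the key move is the substitution $\gamma=2s\beta$, which rewrites $\alpha^{2}-8s^{2}\beta^{2}=1$ as the standard Pell equation $\alpha^{2}-2\gamma^{2}=1$ with the side condition $2s\mid\gamma$. Let $(\alpha_{k},\gamma_{k})_{k\ge 0}$ be its positive solutions, so $\alpha_{k}+\gamma_{k}\sqrt{2}=(3+2\sqrt{2})^{k}$ and $u_{k+1}=6u_{k}-u_{k-1}$ with $\gamma_{0}=0$, $\gamma_{1}=2$. A short induction mod~$4$ yields the period $0,2,0,2,\dots$, so $\gamma_{2m}\equiv 0\pmod{4}$ for every $m$. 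I would set $s:=\gamma_{2m}/4$ for $m\ge 1$; then $(\alpha_{2m},2)$ solves $\alpha^{2}-8s^{2}\beta^{2}=1$, and to conclude $\beta_{0}=2$ it remains to rule out a $\beta=1$ solution, which would demand $2s=\gamma_{2m}/2$ to appear in the sequence $(\gamma_{k})$. But the recurrence gives $\gamma_{2m}-2\gamma_{2m-1}=4\gamma_{2m-1}-\gamma_{2m-2}>0$, hence $\gamma_{2m-1}<\gamma_{2m}/2<\gamma_{2m}$; strict monotonicity of $(\gamma_{k})$ then forbids $\gamma_{2m}/2$ from being any $\gamma_{k}$, and letting $m$ vary produces the required infinite family. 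The main obstacle is precisely this last verification: the mod-$4$ periodicity (so that $s=\gamma_{2m}/4$ is integral) and the size inequality $\gamma_{2m-1}<\gamma_{2m}/2$ must both be read off the three-term recurrence, and together they constitute the only non-routine step in the proof.
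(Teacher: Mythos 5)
Your proof is correct, and parts (a) and (b) follow the paper's argument exactly: the same mod-$8$ computation for (a), and for (b) the same observation that the solutions $(M,s)$ of the Pell equation $\alpha^{2}-8s^{2}=1$ give, for each such $s$, the solution $(M,1)$ of $\alpha^{2}-8s^{2}\beta^{2}=1$, certified fundamental by Remark~\ref{rem:pell-fermat-tricks}. For part (c) you actually produce the same family of integers $s$ as the paper: the paper takes $t=4$, writes $x_{n}+y_{n}\sqrt{8}=(3+\sqrt{8})^{n}$, and for $n$ even sets $s=y_{n}/2$ with claimed fundamental solution $(x_{n},2)$; since $\gamma_{n}=2y_{n}$ in your notation, your $s=\gamma_{2m}/4$ is exactly $y_{2m}/2$. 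The difference is in the justification: the paper simply asserts that $(x_{n},2)$ is fundamental, whereas you supply the missing verification, namely that a $\beta=1$ solution would force $\gamma_{2m}/2$ to occur in the sequence $(\gamma_{k})$, which the gap inequality $\gamma_{2m-1}<\gamma_{2m}/2<\gamma_{2m}$ (read off the recurrence $\gamma_{k+1}=6\gamma_{k}-\gamma_{k-1}$) rules out; you also replace the paper's parity observation ``$y_{n}$ even for $n$ even'' by the mod-$4$ periodicity of $(\gamma_{k})$, which is equivalent. So your route is the same construction with a more complete fundamentality check --- a worthwhile addition, since that step is the only non-obvious point in the paper's proof of (c).
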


\begin{proof}
Let $(\a,\b)$ be a solution of equation $\a^{2}-2t\b^{2}=1$. Suppose
that $\b$ is odd. Then 
\[
\b=\pm1,\pm3\text{ mod }8,
\]
and one has $\b^{2}=1\,\text{mod}\,8$. Since $\a^{2}-2t\b^{2}=1$,
one has $\a^{2}=1+2t$ mod $8$. Since $\a$ is also odd, $\a^{2}=1\,\text{mod }8$,
thus $2t=0\text{ mod }8$ and therefore $t=0\text{ mod 4}$. That
proves part a).

Let $(x_{1},y_{1})$ be the fundamental solution of $x^{2}-2ty^{2}=1$.
For $n\in\ZZ$, the integers $\pm x_{n},\pm y_{n}$ defined by 
\[
x_{n}+y_{n}\sqrt{2t}=(x_{1}+y_{1}\sqrt{2t})^{n}
\]
are the solutions of equation $x^{2}-2ty^{2}=1$. The sequence $(y_{n})_{n\geq1}$
is strictly increasing and we see that the fundamental solution of
\[
x^{2}-2ty_{n}^{2}y^{2}=1
\]
is $(x_{n},1)$. Using part a), we remark that always $ty_{n}^{2}=0\text{ mod 4}$.
Take now $t=4$, we therefore obtain result b). For $n$ even, $y_{n}$
is even; let $z_{n}$ be such that $y_{n}=2z_{n}$. The fundamental
solution of 
\[
x^{2}-2tz_{n}^{2}y^{2}=1
\]
is $(x_{n},2)$ ; taking $t=4$ as in the previous case, one obtains
result c).
\end{proof}
\begin{example}
For $1\leq s\leq100$ such that $8s$ is not a square, i.e. for $s\not\in\{2,8,18,32,50,72,98\}$,
the fundamental solution $(\a_{0},\b_{0})$ of equation $\a^{2}-8s\b^{2}=1$
is such that $\b_{0}$ is even if and only if $s$ is in 
\[
\{7,9,14,23,30,31,33,34,46,47,56,57,62,63,69,71,73,75,77,79,81,82,89,90,94\}.
\]
\end{example}

\subsubsection{The negative Pell-Fermat equation}

The equation 
\begin{equation}
\a^{2}-2t\b^{2}=-1\label{eq:Neg-Pell_fermat}
\end{equation}
is called the \textit{negative Pell-Fermat equation}. If $(x,y)$
is a solution, then $(\a,\b)=(x^{2}-2ty^{2},2xy)$ is a solution of
the Pell-Fermat equation \eqref{eq:Pell Fermat 2t ok}, with $\b$
even. The negative Pell-Fermat equation can be solved by the method
of continued fractions and it has solutions if and only if the period
of the continued fraction has odd length. A necessary (but not sufficient)
condition for solvability is that $t$ is not divisible by a prime
of form $4k+3$. The following Lemma implies that the density of integers
$t$ such that the negative Pell-Fermat equation \eqref{eq:Neg-Pell_fermat}
has no solution is at least $\frac{5}{6}$:
\begin{lem}
\label{lem:Rivat2}Suppose that the negative Pell-Fermat equation
\eqref{eq:Neg-Pell_fermat} has a solution. Then $t=1\,mod\,4$ and
$t\neq0\,mod\,3$, in other words: $t=1$ or $5\,mod\,12$. 
\end{lem}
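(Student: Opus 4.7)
The plan is to prove this by reducing the equation $\alpha^2 - 2t\beta^2 = -1$ modulo small integers, namely $8$ and $3$, and then combining via the Chinese Remainder Theorem. Both congruence conditions come out of elementary observations about the quadratic residues, so no deep input is needed.

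First I would rewrite the equation as $\alpha^2 + 1 = 2t\beta^2$. Working modulo $8$, the right-hand side is even, so $\alpha$ must be odd, hence $\alpha^2 \equiv 1 \pmod 8$ and thus $2t\beta^2 \equiv 2 \pmod 8$, i.e.\ $t\beta^2 \equiv 1 \pmod 4$. In particular $\beta$ cannot be even (otherwise the left side would be $0 \pmod 4$), so $\beta$ is odd and $\beta^2 \equiv 1 \pmod 8$, forcing $t \equiv 1 \pmod 4$.

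Next I would reduce modulo $3$. If $3 \mid t$, then $\alpha^2 \equiv -1 \equiv 2 \pmod 3$, but $2$ is a non-residue modulo $3$ (the squares being $0$ and $1$); contradiction. Hence $t \not\equiv 0 \pmod 3$. Combining the two congruence conditions via CRT: $t \equiv 1 \pmod 4$ together with $t \equiv 1$ or $2 \pmod 3$ gives exactly $t \equiv 1$ or $5 \pmod{12}$, as claimed.

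I do not expect any significant obstacle here; the only subtlety is making sure to push the modulus up to $8$ (not merely $4$) when handling the $2$-adic information, so as to extract the oddness of $\beta$ and thereby get $t\equiv 1 \pmod 4$ rather than the weaker $t\equiv 1 \pmod 2$. The computation modulo $3$ is immediate.
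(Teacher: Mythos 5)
Your proof is correct and follows essentially the same route as the paper: reduce modulo $8$ using the oddness of $\alpha$ and then of $\beta$ to get $t\equiv 1 \pmod 4$, and reduce modulo $3$ (where $-1$ is a non-residue) to exclude $3\mid t$. The concluding CRT step matches the paper's statement exactly.
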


\begin{proof}
Suppose that $(\a,\b)$ is a solution of equation \eqref{eq:Neg-Pell_fermat}.
Since $\a^{2}-2t\b^{2}=-1$, the integer $\a$ is odd, thus $\a^{2}=1\,\text{mod}\,8$,
and $2t\b^{2}=\a^{2}+1=2\,\text{mod}\,8$, which implies that $\b$
is odd (otherwise $2t\b^{2}=0\,\text{mod}\,8$), thus $\b^{2}=1\,\text{mod}\,8$.
In that way $2t=2\,\text{mod}\,8$ hence $t=1\,\text{mod}\,4$.\\
Since $\a^{2}=0$ or $1\,\text{mod}\,3$, one has $2tb^{2}=1$ or
$2\,\text{mod}\,3$, thus $t\neq0\,\text{mod}\,3$. 
\end{proof}
The first few numbers $t$ for which equation \eqref{eq:Neg-Pell_fermat}
is solvable are
\[
1,5,13,25,29,37,41,53,61,65,85,101,109...
\]

\begin{rem}
\label{rem:DensityAtLeast1/2}From Lemmas \ref{lem-Rivat} and \ref{lem:Rivat2},
we conclude that the density of integers $t$ such that the negative
Pell-Fermat equation \eqref{eq:Neg-Pell_fermat} has no solution and
the Pell-Fermat equation \eqref{eq:Pell Fermat 2t ok} has a solution
$(\a_{0},\b_{0})$ with $\b_{0}$ even is at least $\frac{7}{12}$.
\end{rem}

\subsection{The general problem\label{subsec:The-general-problem}}

\subsubsection{Isogenies}

Before to state our results about the question of Shioda \cite[Question 5]{Shioda}
recalled in the Introduction, we can generalize the problem to the
following question:

\textit{Given two abelian surfaces $A$ and $B$ such that $Km(A)\cong Km(B)$
are then $A$ and $B$ isogenous ?}

The answer is positive and certainly well known, in particular to
people working on derived categories on abelian surfaces. For convenience
we give here a short proof: 
\begin{prop}
Let $A$ and $B$ be abelian surfaces such that the associated Kummer
surfaces are isomorphic, then $A$ and $B$ are isogenous abelian
surfaces. 
\end{prop}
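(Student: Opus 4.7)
The plan is to reduce the statement to a derived-category equivalence and then invoke the well-known fact that derived-equivalent abelian varieties are isogenous.

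First I would exploit the standard relation between the transcendental lattice of a Kummer surface and that of the underlying abelian surface. An isomorphism $Km(A)\cong Km(B)$ induces a Hodge isometry $T(Km(A))\cong T(Km(B))$ of transcendental lattices. Using the canonical identification $T(Km(X))\simeq T(X)(2)$ (the transcendental lattice of the abelian surface with its intersection form multiplied by $2$, coming from the double cover $A\dashrightarrow Km(A)$ of degree $2$), one gets a Hodge isometry $T(A)(2)\cong T(B)(2)$. Rescaling the form by $\tfrac12$ yields an isomorphism of rational Hodge structures $T(A)_{\QQ}\simeq T(B)_{\QQ}$.

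Next I would apply Stellari's Theorem 1.2 from \cite{Stellari}: for abelian surfaces, a Hodge isometry between the transcendental lattices (equivalently, an isomorphism of the associated Kummer surfaces) implies that $A$ and $B$ are Fourier-Mukai partners, i.e.\ $D^{b}(A)\simeq D^{b}(B)$. This is precisely the step that makes the argument a ``direct consequence'' of Stellari's work, as advertised in the introduction.

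Finally, by Orlov's classification of derived equivalences between abelian varieties, a derived equivalence $D^{b}(A)\simeq D^{b}(B)$ produces a symplectic isomorphism $A\times\hat{A}\cong B\times\hat{B}$. Composing an inclusion of $A$ as a factor with the projection to $B$ yields a non-trivial homomorphism $A\to B$ of finite kernel (after possibly combining with projection to $\hat B$ and dualizing), hence an isogeny $A\sim B$.

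The main obstacle is the first step: making sure that the isomorphism $Km(A)\cong Km(B)$ really produces a Hodge isometry of the $\QQ$-Hodge structures $T(A)_\QQ\cong T(B)_\QQ$, and in a form that fits the precise hypotheses of Stellari's theorem (which may involve a choice of Mukai vector / polarization). The rest — the identification $T(Km(X))\simeq T(X)(2)$ and the passage from Fourier-Mukai equivalence to isogeny — is standard and formal.
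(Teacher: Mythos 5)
Your first step (the Hodge isometry $T(Km(A))\cong T(Km(B))$ coming from the isomorphism, together with $T(Km(A))\cong T(A)(2)$) is correct, but the pivotal middle step is wrong as stated. Stellari's Theorem 1.2 does \emph{not} assert that a Hodge isometry of transcendental lattices makes $A$ and $B$ Fourier--Mukai partners; its content is that an equivalence $D^{b}(Km(A))\simeq D^{b}(Km(B))$ (equivalently, a Hodge isometry of the \emph{rational} transcendental lattices) forces $A$ and $B$ to be isogenous. Moreover, the implication you use is actually false at the rational level: any two isogenous abelian surfaces have Hodge-isometric rational transcendental lattices, whereas the set of Fourier--Mukai partners of a fixed abelian surface is finite (of cardinality $2^{\nu}$ by \cite{Orlov}), so a rational Hodge isometry $T(A)_{\QQ}\cong T(B)_{\QQ}$ cannot in general produce $D^{b}(A)\simeq D^{b}(B)$. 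So as written, the proof both misattributes the statement it hinges on and relies on an untrue implication; in effect it assumes the hard part (rational Hodge isometry $\Rightarrow$ isogeny), which is exactly what \cite[Theorem 1.2]{Stellari} provides.

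The argument can be repaired in two ways. First, by unnecessarily rescaling you threw away integrality: an isometry $T(A)(2)\cong T(B)(2)$ is literally the same map as an \emph{integral} Hodge isometry $T(A)\cong T(B)$ (divide both forms by $2$), and for abelian surfaces an integral Hodge isometry of transcendental lattices does imply $D^{b}(A)\simeq D^{b}(B)$ (this criterion is due to Orlov and is used in \cite{HLOY}, not \cite{Stellari}); then Orlov's symplectic isomorphism $A\times\hat{A}\cong B\times\hat{B}$ yields the isogeny as you describe. Second, and much more directly --- this is the paper's proof --- $Km(A)\cong Km(B)$ gives $D^{b}(Km(A))\simeq D^{b}(Km(B))$, and \cite[Theorem 1.2]{Stellari} concludes the isogeny in one step, with no need for the transcendental-lattice computation or the detour through derived equivalence of $A$ and $B$.
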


\begin{proof}
Since $Km(A)\cong Km(B)$ then the derived categories $D^{b}(Km(A))$
and $D^{b}(Km(B))$ are equivalent. Thus by \cite[Theorem 1.2]{Stellari},
the abelian surfaces are isogenous. 
\end{proof}

\subsubsection{Notations and known results on the Néron-Severi group of a Kummer
surface.}

Let $t\in\NN$ be an integer and let $B$ be a generic Abelian surface
with (primitive) polarization $M$ such that $M^{2}=2t$. Let $X=\Km B)$
be the associated Kummer surface. Let $A_{1},\dots,A_{16}$ be the
$16$ disjoint $(-2)$-curves on $X$ that are resolution of the singularities
of the quotient $B/[-1]$. By \cite[Proposition 3.2]{Morrison}, \cite[Proposition 2.6]{GS},
corresponding to the polarization $M$ on $B$, there is a primitive
big and nef divisor $L$ on $\Km B)$ such that 
\[
L^{2}=4t
\]
and $LA_{i}=0,\,i\in\{1,\dots,16\}$. The Néron-Severi group of $X=\Km B)$
satisfies: 
\[
\ZZ L\oplus K\subset\NS X),
\]
where $K$ denotes the Kummer lattice (the saturated lattice containing
the $16$ disjoint $(-2)$-curves $A_{i},\,i=1,\dots,16$) which is
a negative definite lattice of rank $16$ and discriminant $2^{6}$.
For $B$ generic among polarized Abelian surfaces $\rk\NS X))=17$
and $\NS X)$ is an over-lattice of index two of $\ZZ L\oplus K$
which is described precisely in \cite[Theorem 2.7]{GS}, in particular
we will repeatedly use the following result:
\begin{lem}
\label{lem:At most 4 beta}(\cite[Remarks 2.3 \& 2.10]{GS}) An element
$\Gamma\in\NS X)$ has the form $\Gamma=\alpha L-\sum\beta_{i}A_{i}$
with $\alpha,\beta_{i}\in\frac{1}{2}\ZZ$. If $\alpha$ or $\beta_{i}$
for some $i$ is in $\frac{1}{2}\ZZ\setminus\ZZ,$ then at least $4$
of the $\beta_{j}$'s are in $\frac{1}{2}\ZZ\setminus\ZZ$. If $\alpha\in\ZZ$,
then at least $8$ of the $\beta_{j}$'s are in $\frac{1}{2}\ZZ\setminus\ZZ$
or $\forall j,\,\b_{j}\in\ZZ$.
\end{lem}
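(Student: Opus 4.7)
The statement compares $\NS X)$ with its sublattice $N:=\ZZ L\oplus\bigoplus_{i=1}^{16}\ZZ A_{i}$, so the argument is purely lattice-theoretic. The inputs are Nikulin's description of the Kummer lattice $K$ as an overlattice of $\bigoplus_{i}\ZZ A_{i}$ of index $2^{5}$, and the fact, recalled from \cite[Theorem~2.7]{GS}, that $\ZZ L\oplus K\subset\NS X)$ has index $2$. First I would package the glue data as an $\FF_{2}$-code. Since the quotient $\NS X)/N$ has order $2^{6}$ and is $2$-torsion, every $\G\in\NS X)$ satisfies $2\G\in N$, which yields the $\frac{1}{2}\ZZ$-expansion asserted in the statement and realizes $\overline{\mathcal{D}}:=\NS X)/N$ as a linear code of dimension $6$ in $\frac{1}{2}N/N\cong\FF_{2}^{17}$. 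Its projection onto the sixteen $A$-coordinates is the Kummer code $\mathcal{K}$ of dimension $5$, whose $31$ nonzero words are the characteristic functions of the $30$ affine hyperplanes in $B[2]\cong\FF_{2}^{4}$ (each of weight $8$) together with the all-ones word (weight $16$). The projection of $\overline{\mathcal{D}}$ onto the $L$-coordinate has kernel $\mathcal{K}$, and by \cite[Theorem~2.7]{GS} the remaining generator is a codeword $w_{L}=(1\mid u_{0})$ with $u_{0}\in\FF_{2}^{16}$ well-defined modulo $\mathcal{K}$.

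With this setup the second claim is immediate: if $\a\in\ZZ$, the class of $\G$ in $\overline{\mathcal{D}}$ lies in $\mathcal{K}$, so the set $\{j:\b_{j}\in\frac{1}{2}\ZZ\setminus\ZZ\}$ is either empty or a nonzero word of $\mathcal{K}$, hence of weight $8$ or $16$.

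The first claim reduces to an affine weight bound on the coset $u_{0}+\mathcal{K}\subset\FF_{2}^{16}$: when $\a\in\frac{1}{2}\ZZ\setminus\ZZ$, the set $\{j:\b_{j}\in\frac{1}{2}\ZZ\setminus\ZZ\}$ equals $u_{0}\triangle c$ for some $c\in\mathcal{K}$, and I must show that every such set has cardinality at least $4$. This is the main obstacle. The concrete plan is as follows: any representative $\frac{1}{2}(L+\sum_{j\in T}A_{j})\in\NS X)$ has self-intersection $t-|T|/2$, which must be an even integer since $\NS X)$ is an even lattice, forcing $|T|\equiv2t\,\mathrm{mod}\,4$; the case $|T|=0$ is excluded because $L$ is primitive in $\NS X)$; and the remaining low-weight case $|T|=2$, which can only arise for $t$ odd by the previous congruence, would contradict the explicit form of $u_{0}$ in \cite[Theorem~2.7]{GS}, since no two-element subset lies in the coset $u_{0}+\mathcal{K}$. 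Combining these exclusions yields $|T|\geq4$ in every representative, and therefore proves the first claim.
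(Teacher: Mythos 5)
This lemma is not proved in the paper at all: it is quoted verbatim from \cite[Remarks 2.3 \& 2.10]{GS}, so your argument is not being measured against an in-paper proof but is a reconstruction of the cited result, and it is essentially correct. The reduction of both claims to the glue code $\overline{\mathcal{D}}=\mathrm{NS}(X)/\bigl(\ZZ L\oplus\bigoplus_{i}\ZZ A_{i}\bigr)\subset\FF_{2}^{17}$, the identification of the nonzero Kummer-code words as the $30$ weight-$8$ affine hyperplanes plus the all-ones word, and the parity argument $|T|\equiv 2t\pmod 4$ (from evenness of $\mathrm{NS}(X)$) combined with primitivity of $L$ are all sound, and they settle everything except a possible weight-$2$ word in the coset $u_{0}+\mathcal{K}$ when $t$ is odd. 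Three remarks. First, a slip in your setup: the projection of $\overline{\mathcal{D}}$ onto the sixteen $A$-coordinates has dimension $6$, not $5$ --- it equals $\mathcal{K}\cup(u_{0}+\mathcal{K})$ with $u_{0}\notin\mathcal{K}$ precisely because $\tfrac{1}{2}L\notin\mathrm{NS}(X)$; what you mean, and what you actually use, is that the subcode of words with trivial $L$-component is $\mathcal{K}$. Second, for the weight-$2$ exclusion you fall back on the explicit generator in \cite[Theorem 2.7]{GS}; that is legitimate here since the whole lemma is a citation, but the step can be made intrinsic: the coset $u_{0}+\mathcal{K}$ is stable under the action of the $2$-torsion translations of $B$ (which fix the class $L$ and permute the $A_{i}$ by translations of the index set $\FF_{2}^{4}$), so if it contained a pair $\{p,q\}$ it would also contain a disjoint translate $\{p+v,q+v\}$, whose symmetric difference with $\{p,q\}$ would be a weight-$4$ word of $\mathcal{K}$, contradicting the minimum distance $8$ of that code. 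Third, the hypothesis of the first claim also allows $\alpha\in\ZZ$ with some $\beta_{i}$ half-integral; that case is covered by your second claim (at least eight half-integral $\beta_{j}$'s), but you should say so explicitly rather than treating only the case $\alpha\in\frac{1}{2}\ZZ\setminus\ZZ$.
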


\subsubsection{The Pell-Fermat equation and construction of $(-2)$-classes\label{subsec:The-3-equations}}

We are looking for a polarization $L'$ and a class $A_{1}'$ of the
form 
\[
\begin{array}{c}
A'_{1}=\b L-\a A_{1}\\
L'=bL-aA_{1}
\end{array},
\]
with $\a,\b,a,b\in\NN\setminus\{0\}$ such that one has $A_{1}'^{2}=-2$,
$L'A_{1}'=0$ and $L'^{2}=L^{2}=4t$. These three conditions are respectively
\begin{equation}
\begin{array}{c}
\a^{2}-2t\b^{2}=1\\
2tb\b=a\a\\
a^{2}=2t(b^{2}-1)
\end{array},\label{eq:tree conditions}
\end{equation}
the first expresses that $A_{1}'$ is a $(-2)$-class, the second
that this $(-2)$-class is disjoint from the polarisation $L'$, the
third that $L^{2}=L'^{2}$. We will use the divisor $L'$ and the
property that $L'A_{1}'=0$ in order to show that $A_{1}'$ can be
represented by an irreducible curve. 
\begin{lem}
There are non-trivial solutions to the three equations \eqref{eq:tree conditions}
if and only if $2t$ is not a square. In that case, if $(\a,\b)$
is a solution of the first equation in \eqref{eq:tree conditions},
one has 
\[
(a,b)=(2t\b,\a).
\]
\end{lem}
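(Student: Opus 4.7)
The plan is to reduce the entire system \eqref{eq:tree conditions} to the single Pell-Fermat equation via the explicit substitution $(a,b) = (2t\b, \a)$, and then invoke the classical theory recalled in Section \ref{subsec:Cases-when-not square} for the existence statement.

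First I would handle the easy direction: given $(\a,\b) \in \NN^*$ satisfying $\a^{2}-2t\b^{2}=1$, I would set $(a,b) := (2t\b, \a)$ and verify equations two and three by direct substitution. Equation two becomes $2t\a\b = 2t\b \cdot \a$, which is trivial. Equation three becomes $4t^{2}\b^{2} = 2t(\a^{2}-1)$, and using the first equation to rewrite $\a^{2}-1 = 2t\b^{2}$, the right-hand side equals $4t^{2}\b^{2}$.

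Next I would establish uniqueness, i.e.\ that this substitution is forced. Starting from an arbitrary positive integer solution $(\a,\b,a,b)$ of all three equations, I would use equation two to write $a\a = 2tb\b$, square it, and substitute into equation three multiplied by $\a^{2}$. After simplification one obtains
\[
b^{2}(\a^{2}-2t\b^{2}) = \a^{2},
\]
and equation one reduces this to $b^{2} = \a^{2}$. Since all quantities are positive, $b = \a$, and then equation two gives $a = 2t\b$.

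For the dichotomy on $2t$: if $2t = k^{2}$ is a square, then $\a^{2}-2t\b^{2} = (\a-k\b)(\a+k\b) = 1$ admits only the trivial integer solutions $(\pm 1, 0)$, so no non-trivial solution of the system exists. Conversely, if $2t$ is not a square, the Pell-Fermat equation has a fundamental solution $(\a_{0}, \b_{0}) \in (\NN^{*})^{2}$ by the discussion at the beginning of Section \ref{subsec:Cases-when-not square}, and the substitution above produces a non-trivial solution to the full system. There is no real obstacle here; the argument is a routine algebraic reduction, with the only minor care being the extraction of $b = \a$ from $b^{2} = \a^{2}$, justified by the positivity hypothesis $\a,\b,a,b \in \NN\setminus\{0\}$.
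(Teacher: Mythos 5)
Your proposal is correct and follows essentially the same route as the paper: substituting the expression for $a$ obtained from the second equation into the third, reducing to $b^{2}(\a^{2}-2t\b^{2})=\a^{2}$, and using the Pell-Fermat equation together with positivity to conclude $b=\a$ and $a=2t\b$. The only differences are cosmetic — you verify the forward substitution and the square case of $2t$ explicitly, which the paper leaves to the classical theory recalled earlier.
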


\begin{proof}
In order that the Pell-Fermat equation \eqref{eq:Pell Fermat 2t ok}
admits a solution, we need that $2t$ is not a square. Let us suppose
that this is the case and let $(\a,\b)$ be such a solution, which
we can suppose with $\a>0,\,\b>0$. By replacing $a=2t\frac{\b}{\a}b$
in the third equation, one gets 
\[
4t^{2}b^{2}\b^{2}=2t\a^{2}(b^{2}-1),
\]
which is equivalent to 
\[
b^{2}(\a^{2}-2t\b^{2})=\a^{2},
\]
since $\a^{2}-2t\b^{2}=1$ and we search solutions with $b>0$, we
obtain $b=\a$. Then by the third equality, we get $a^{2}=2t(\a^{2}-1)$
and equality $\a^{2}-1=2t\b^{2}$ implies $a=2t\b$.
\end{proof}

\subsection{\label{subsec:The--odd}The $\protect\b_{0}$ odd case}

Suppose $2t$ is not a square and let $(\a_{0},\b_{0})$ be a solution
of equation \eqref{eq:Pell Fermat 2t ok}. Let us suppose that $\b_{0}$
is odd and let us define 
\[
A_{1}'=\b_{0}L-\a_{0}A_{1},
\]
which is a $(-2)$-class. Then 
\begin{prop}
The $(-2)$-class $A_{1}'=\b_{0}L-\a_{0}A_{1}$ cannot be the class
of an irreducible rational curve. 
\end{prop}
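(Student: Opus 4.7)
I would argue by contradiction and suppose that $A_{1}'=\b_{0}L-\a_{0}A_{1}$ is the class of an irreducible $(-2)$-curve on $X$. Riemann--Roch on $X$ together with $L\cdot A_{1}'=4t\b_{0}>0$ and $L$ nef (which rules out $-A_{1}'$ being effective) ensures $A_{1}'$ is at any rate effective; the question is whether its effective representative can be a single smooth rational curve.

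A first useful observation, which will be the source of the obstruction, is that when $\b_{0}$ is odd the negative Pell--Fermat equation $\a^{2}-2t\b^{2}=-1$ has no solution. Indeed, any solution $(\a_{1},\b_{1})$ of the latter yields the fundamental solution $(\a_{1}^{2}+2t\b_{1}^{2},\,2\a_{1}\b_{1})$ of \eqref{eq:Pell Fermat 2t ok}, forcing $\b_{0}=2\a_{1}\b_{1}$ to be even, against our hypothesis.

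The main step of the proof is then to pull the situation back to the standard abelian cover. Let $\pi'\colon\tilde B\to B$ be the blow-up of $B$ at the sixteen $2$-torsion points $p_{1},\dots,p_{16}$, with exceptional divisors $E_{1},\dots,E_{16}$, and let $\pi\colon\tilde B\to X$ be the double cover ramified along $\sum E_{i}$. From $\pi^{*}L=2M$ and $\pi^{*}A_{i}=2E_{i}$ one obtains
\[
\pi^{*}A_{1}'=2(\b_{0}M-\a_{0}E_{1}),
\]
a class divisible by $2$ in $\NS \tilde B)$. Since $A_{1}'\cdot A_{1}=2\a_{0}>0$, if $A_{1}'$ were irreducible then $C:=\pi^{-1}(A_{1}')$ would be a smooth connected curve, a double cover of $A_{1}'\simeq\PP^{1}$ branched in the $2\a_{0}$ intersection points of $A_{1}'$ with $A_{1}$; Riemann--Hurwitz gives $g(C)=\a_{0}-1$, the covering involution $\iota$ restricts to the hyperelliptic involution on $C$, and the image $D:=\pi'_{*}C\subset B$ would be a reduced irreducible curve of class $2\b_{0}M$ with a unique ordinary singular point of multiplicity $2\a_{0}$ at $p_{1}$, whose normalization is hyperelliptic with hyperelliptic involution induced by $-1_{B}$. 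The expected contradiction is that such a curve on a generic $B$ would produce, via a Prym-type or theta-characteristic parity argument on the double cover $C\to A_{1}'$, a square root in $\Pic \tilde B)$ of $\mathcal{O}_{\tilde B}(C)$ descending to a class on $B$ of self-intersection $-1$ of the form $\a M-\b E_{1}$, that is, a solution of the negative Pell--Fermat equation, which is impossible when $\b_{0}$ is odd.

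The main obstacle is of course to turn this last step into a rigorous argument: the Prym/parity statement needs to be converted into an explicit integral class in $\NS B)$ realising the required self-intersection $-1$. A more elementary alternative that I would also try is to remain on $X$ and use Lemma~\ref{lem:At most 4 beta} to exhibit a half-integer effective class $E\in\NS X)$ with $0<E<A_{1}'$ in the effective order, thereby contradicting irreducibility directly; the parity $\b_{0}$ odd is precisely what makes the right number of half-integer $\beta_{j}$'s available to place $E$ and $A_{1}'-E$ simultaneously in $\NS X)$ with admissible coefficients, whereas for $\b_{0}$ even the analogous construction would violate Lemma~\ref{lem:At most 4 beta}, consistent with the main theorem holding only in that case.
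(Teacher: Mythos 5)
Your proposal does not yet contain a proof, and the route you sketch is quite far from the argument the paper actually uses. The paper's proof is two lines: if $A_{1}'$ were an irreducible $(-2)$-curve, then $\mathcal{C}'=A_{1}'+\sum_{i\geq2}A_{i}$ would be a Nikulin configuration, and Nikulin configurations are $2$-divisible in $\NS X)$; subtracting the $2$-divisible class $\mathcal{C}$ shows that $\frac{1}{2}(A_{1}+A_{1}')=\frac{\b_{0}}{2}L-\frac{\a_{0}-1}{2}A_{1}$ is integral, and since $\a_{0}$ is odd and $\b_{0}$ is odd this forces $\frac{1}{2}L\in\NS X)$, contradicting the primitivity of $L$. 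The key input you are missing is precisely this $2$-divisibility property of Nikulin configurations (Nikulin's theorem); once it is invoked, no Pell--Fermat analysis, no pullback to the abelian surface, and no Prym theory are needed.

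As for your two routes: the first one is explicitly left open by you (``the main obstacle is of course to turn this last step into a rigorous argument''), so it cannot count as a proof; moreover, even the target is off, since a class $\a M-\b E_{1}$ on the blow-up with $(\a M-\b E_{1})^{2}=2t\a^{2}-\b^{2}=-1$ corresponds to a solution of the \emph{positive} equation $\b^{2}-2t\a^{2}=1$, not the negative one, so the intended contradiction with ``$\b_{0}$ odd $\Rightarrow$ no solution of the negative equation'' would not materialize. Your second, ``elementary'' route is much closer to something that works --- indeed the paper carries it out explicitly for $t=4$, where $A_{1}'=2A_{1}''+A_{2}+A_{3}+A_{4}$ with $A_{1}''=\frac{1}{2}(L-3A_{1}-A_{2}-A_{3}-A_{4})$ --- but as written it is only a gesture: you would need to exhibit the class $E=\frac{1}{2}(\b_{0}L-\a_{0}A_{1}-A_{2}-A_{3}-A_{4})$, prove it actually lies in $\NS X)$ (Lemma \ref{lem:At most 4 beta} gives only a necessary condition; existence requires the explicit description of \cite[Theorem 2.7]{GS} together with the fact, from Lemma \ref{lem-Rivat}, that $\b_{0}$ odd forces $t\equiv0\bmod4$, hence $t$ even), check $E^{2}=-2$ and $LE>0$ so that $E$ is effective, and then conclude from $A_{1}'=2E+A_{2}+A_{3}+A_{4}$ that $A_{1}'$ is reducible. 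None of these steps appears in your write-up, so the argument as it stands has a genuine gap.
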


\begin{proof}
Suppose that $A_{1}'$ is irreducible. Then we have two Nikulin configurations
\[
\mathcal{C}=\sum_{i=1}^{16}A_{i},\,\mathcal{C}'=A_{1}'+\sum_{i=2}^{16}A_{i}.
\]
Since Nikulin configurations are $2$-divisible (see \cite{Nikulin}),
the divisor $A_{1}+A_{1}'$ is $2$-divisible and 
\[
\frac{1}{2}(A_{1}+A_{1}')=\frac{\b_{0}}{2}L-\frac{\a_{0}-1}{2}A_{1}
\]
is an integral class. Since $\b_{0}$ is odd by assumption and $\a_{0}$
must be odd by the equality $\a_{0}^{2}-2t\b_{0}^{2}=1$, it follows
that $\frac{L}{2}\in\NS X)$, which contradicts $L$ being a primitive
class.
\end{proof}
We will come back to this case (when $\b_{0}$ is odd) in Subsection
\ref{subsec:An-example-whent odd}.

\subsection{\label{subsec:The--even}The $\protect\b_{0}$ even case: $\protect\b_{0}L-\protect\a_{0}A_{1}$
is the class of a $(-2)$-curve}

Assume $2t$ is not a square. Let $(\a_{0},\b_{0})$ be the fundamental
solution of the Pell-Fermat equation \eqref{eq:Pell Fermat 2t ok}.
We assume in this section that $\beta_{0}$ is even and we define
as in Subsection \ref{subsec:The-3-equations} the classes:
\[
A_{1}'=\b_{0}L-\a_{0}A_{1},\,\,L'=\a_{0}L-2t\b_{0}A_{1}.
\]
One has $A_{1}'^{2}=-2$, $L'A_{1}'=0,$ $L'^{2}=L^{2}=4t$. 
\begin{prop}
Suppose that $\b_{0}$ is even. The class $L'$ is big and nef and
the classes $A_{1}',A_{2}\dots,A_{16}$ are the only $(-2)$-classes
contracted by $L'$.
\end{prop}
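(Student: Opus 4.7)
The plan is to dispatch bigness first via a positive-cone argument, and then to handle nefness and the characterization of contracted classes simultaneously through a case analysis of $L' \cdot C$ for an arbitrary $(-2)$-class $C$.

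For bigness, a direct computation using $\alpha_0^2 - 2t\beta_0^2 = 1$ gives $L'^2 = 4t > 0$ and $L \cdot L' = 4t\alpha_0 > 0$, so $L'$ lies in the same connected component of the positive cone as $L$. Riemann--Roch on the K3 surface then forces $L'$ or $-L'$ to be effective, and the combination of $L$ nef with $L \cdot L' > 0$ rules out the latter. Hence $L'$ is effective, and bigness follows from $L'^2 > 0$.

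For nefness it suffices to check $L' \cdot C \geq 0$ for every irreducible $(-2)$-curve $C$. Writing $C = \alpha L - \sum \beta_i A_i$ with $\alpha, \beta_i \in \frac{1}{2}\ZZ$ via Lemma \ref{lem:At most 4 beta}, the condition $C^2 = -2$ becomes $\sum \beta_i^2 = 2t\alpha^2 + 1$, while a short computation yields $L' \cdot C = 4t(\alpha_0\alpha - \beta_0\beta_1)$. Using $\alpha_0^2 - 2t\beta_0^2 = 1$, the key identity
$$
(\alpha_0\alpha - \beta_0\beta_1)(\alpha_0\alpha + \beta_0\beta_1) = \alpha^2 + \beta_0^2\Big(\sum_{i\geq 2}\beta_i^2 - 1\Big)
$$
drives the analysis. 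The cases $\alpha = 0$ (where genericity forces $C = \pm A_i$) and $\alpha > 0$, $\beta_1 \leq 0$ are immediate. In the integer subcase with $\alpha, \beta_1 > 0$, either $\sum_{i\geq 2}\beta_i^2 \geq 1$, giving strict positivity of the right-hand side, or $\sum_{i\geq 2}\beta_i^2 = 0$, in which case the equation reduces to the Pell--Fermat equation $\beta_1^2 - 2t\alpha^2 = 1$ whose fundamental positive solution is $(\beta_0, \alpha_0)$, yielding precisely $C = A_1'$ and $L' \cdot C = 0$.

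The main obstacle will be the half-integer subcases of Lemma \ref{lem:At most 4 beta}, where either $\alpha$ or some $\beta_j$ lies in $\frac{1}{2}\ZZ \setminus \ZZ$: one must combine the combinatorial constraint (at least $4$, respectively $8$, of the $\beta_j$ being half-integers, each contributing at least $\frac{1}{4}$ to $\sum \beta_i^2$) with the arithmetic Pell--Fermat information to force the right-hand side of the identity to be non-negative, thereby ruling out additional contracted classes. The evenness of $\beta_0$ enters here to ensure that $\beta_0\beta_1$ is integral and to align the analysis with the over-lattice structure of $\NS X)$. Finally, the characterization of contracted $(-2)$-classes is read off from the equality cases of the identity: they are precisely $\pm A_i$ for $i \geq 2$ together with $\pm A_1'$.
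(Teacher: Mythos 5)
Your setup is essentially the paper's: bigness via Riemann--Roch and the positive cone, nefness by testing $L'$ against a $(-2)$-curve $\G=\a L-\sum\b_{i}A_{i}$ and combining $\sum\b_{i}^{2}=2t\a^{2}+1$ with $\a_{0}^{2}-2t\b_{0}^{2}=1$; your product identity is a repackaging of the paper's inequality $\sum_{i\geq2}v_{i}^{2}\leq1-v_{1}^{2}/\a_{0}^{2}$. The integral cases are handled correctly (in particular the subcase $\sum_{i\geq2}\b_{i}^{2}=0$, where the fundamental solution of the Pell--Fermat equation pins down $A_{1}'$). But the proof is not finished: the half-integral case is exactly where the content of the proposition lies, and you leave it as an announced ``obstacle'' with a one-sentence strategy that, as stated, cannot succeed. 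Concretely, when $\a\in\frac{1}{2}\ZZ\setminus\ZZ$, Lemma \ref{lem:At most 4 beta} forces (up to permutation) $\b_{2}=\b_{3}=\b_{4}=\frac{1}{2}$ and $\sum_{i\geq2}\b_{i}^{2}=\frac{3}{4}$, so the $(-2)$-condition becomes $(2\b_{1})^{2}-2t(2\a)^{2}=1$; minimality of $(\a_{0},\b_{0})$ then forces $2\a=\b_{0}$, $2\b_{1}=\a_{0}$, i.e. the candidate class $\G=\frac{1}{2}(\b_{0}L-\a_{0}A_{1}-A_{2}-A_{3}-A_{4})$, for which your right-hand side equals exactly $0$. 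So ``forcing the right-hand side to be non-negative'' does not rule out this extra contracted class; one must show it does not lie in $\NS X)$ at all.

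That exclusion is the only place where the hypothesis that $\b_{0}$ is even is genuinely used, and your stated reason for its role (ensuring $\b_{0}\b_{1}$ is integral) is not the actual mechanism. The correct argument, as in the paper, is lattice-theoretic: since $\b_{0}$ is even, the coefficient of $L$ in $\G$ is the integer $\frac{\b_{0}}{2}$, and Lemma \ref{lem:At most 4 beta} then demands that either all the $\b_{j}$ are integers or at least $8$ of them are in $\frac{1}{2}\ZZ\setminus\ZZ$; but $\G$ has exactly four half-integral coefficients $\frac{\a_{0}}{2},\frac{1}{2},\frac{1}{2},\frac{1}{2}$ (recall $\a_{0}$ is odd), so $\G\notin\NS X)$. (Equivalently, $(2\b_{1},2\a)$ would be a Pell solution with odd second coordinate, which is impossible once $\b_{0}$ is even, by an easy induction.) Until this case is carried out, both the nefness of $L'$ and the characterization of the contracted $(-2)$-classes remain unproven.
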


\begin{proof}
The class $L'$ is nef if and only if for any $(-2)$-curves $\G$,
one has $\G L'\geq0$. Let 
\[
\G=uL-\sum_{i=1}^{16}v_{i}A_{i}
\]
be a $(-2)$-curve (thus $\sum v_{i}^{2}-2tu^{2}=1$) ; we recall
that by Lemma \ref{lem:At most 4 beta}, if one coefficient $u$ or
$v_{i}$ is in $\frac{1}{2}\ZZ\setminus\ZZ$, then at least four of
the $v_{i}$'s are in $\frac{1}{2}\ZZ\setminus\ZZ$. Suppose that
\[
\G L'\leq0,
\]
this is equivalent to 
\[
u\a_{0}\leq v_{1}\b_{0},
\]
in other words $u\leq\frac{\b_{0}}{\a_{0}}v_{1}$, thus 
\[
\sum_{i\geq1}v_{i}^{2}=2tu^{2}+1\leq2t\left(\frac{\b_{0}^{2}}{\a_{0}^{2}}v_{1}^{2}\right)+1=2t\b_{0}^{2}\left(\frac{v_{1}^{2}}{\a_{0}^{2}}\right)+1
\]
 and therefore using the relation $\a_{0}^{2}-2t\b_{0}^{2}=1$, one
obtains 
\[
\sum_{i\geq1}v_{i}^{2}\leq(\a_{0}^{2}-1)\left(\frac{v_{1}^{2}}{\a_{0}^{2}}\right)+1
\]
and therefore
\[
\sum_{i\geq2}v_{i}^{2}\leq1-\frac{v_{1}^{2}}{\a_{0}^{2}}.
\]
Apart from the trivial cases of curves $\G=A_{i}$ for $i>1$, one
can suppose $u>0$. If $v_{1}>\frac{1}{2}\a_{0}$ then $\sum_{i\geq2}v_{i}^{2}<\frac{3}{4}$,
then by Lemma \ref{lem:At most 4 beta}, one gets $v_{2}=\dots=v_{16}=0$,
$v_{1}=\a_{0}$, $u=\b_{0}$, so that $\G=A_{1}'$, for which $\G L'=0$.
Thus one can suppose that 
\begin{equation}
0<v_{1}\leq\frac{1}{2}\a_{0}\label{eq:truc23}
\end{equation}
(if $v_{1}=0$, then $u=0$, which we already excluded) and, up to
permutation of the indices: $v_{2}=v_{3}=v_{4}=\frac{1}{2}$ (since
$\sum_{i\geq2}v_{i}^{2}<1$ and by the structure of the Néron-Severi
group as described in Lemma \ref{lem:At most 4 beta}). The relation
$\sum v_{i}^{2}-2tu^{2}=1$ is now $v_{1}^{2}-2tu^{2}=\frac{1}{4}$,
which is 
\[
(2v_{1})^{2}-2t(2u)^{2}=1.
\]
Defining $V=2v_{1}\in\NN$ and $U=2u\in\NN$, we see that $(U,V)$
is a solution of the Pell-Fermat equation $\b^{2}-2t\a^{2}=1$. Moreover,
by Equation \eqref{eq:truc23} we know that $0<V\leq\a_{0}$. Since
by hypothesis $(\a_{0},\b_{0})$ is the primitive solution, and $V>0$,
we have $\a_{0}\leq V$. Therefore, by remark \ref{rem:pell-fermat-tricks}:
$V=\a_{0}$, which implies that $U=\b_{0}$, thus $v_{1}=\frac{1}{2}\a_{0}$,
$u=\frac{1}{2}\b_{0}$, and thus (for $\G\neq A_{2},\dots,A_{16}$)
we have 
\[
\G L'\leq0
\]
if and only if $\G L'=0$ and $\Gamma$ has the form $\G=\frac{1}{2}(\b_{0}L-\a_{0}A_{1}-A_{2}-A_{3}-A_{4})$.
But by Lemma \ref{lem:At most 4 beta}, in order for $\G$ to be in
$\NS X)$, the integer $\b_{0}$ must be odd, which is impossible
by our assumption on $\beta_{0}$. 

In conclusion, we obtain that $L'$ is big and nef, and if $\beta_{0}$
is even, then the only $(-2)$-classes $\G$ such that $\G L'=0$
are $A_{1}',A_{2},\dots,A_{16}$.
\end{proof}
Let us prove the following result:
\begin{prop}
Suppose that $\b_{0}$ is even. The line bundle $3L'$ (where $L'=\a_{0}L-2t\b_{0}A_{1}$)
defines a morphism $\phi_{3L'}:X\to\PP^{N}$ which is birational onto
its image and contracts exactly the divisor $A_{1}'=\b_{0}L-\a_{0}A_{1}$
and the $15$ $(-2)$-curves $A_{i},\,i\geq2$.
\end{prop}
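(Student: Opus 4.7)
The approach has two parts: first, to show that the class $A_1'$ is represented by an irreducible $(-2)$-curve disjoint from $A_2, \dots, A_{16}$; second, to invoke the classical very-ampleness of $3\bar L$ for $\bar L$ ample on a K3 surface with rational double points, applied to the descent of $L'$ after contracting the sixteen resulting disjoint $(-2)$-curves.

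For the first part, I first argue that $A_1'$ is effective: Riemann--Roch on the K3 gives $h^0(A_1') + h^0(-A_1') \geq 1$ from $(A_1')^2 = -2$, while $A_1' \cdot L = 4t\b_0 > 0$ with $L$ effective (big and nef on a K3) rules out $-A_1'$ being effective, so $A_1'$ is. Writing $A_1' = \sum_i m_i C_i$ as a sum over distinct irreducible components with $m_i > 0$, nefness of $L'$ and $A_1' \cdot L' = 0$ force each $C_i \cdot L' = 0$, hence by the Hodge index theorem each $C_i$ is a $(-2)$-curve. The previous proposition then yields $[C_i] \in \{A_1', A_2, \dots, A_{16}\}$. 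Pairing with $A_j$ for $j \geq 2$ rules out $[C_i] = A_j$: such a component would contribute $-2 m_i$ to $A_1' \cdot A_j = 0$ with no compensating term, since all other allowed classes intersect $A_j$ trivially. Hence every $[C_i] = A_1'$; taking classes gives $\sum_i m_i = 1$, so $A_1'$ is irreducible. Disjointness from each $A_j$ then follows from $A_1' \cdot A_j = 0$ and irreducibility.

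For the second part, I contract the sixteen pairwise disjoint $(-2)$-curves $A_1', A_2, \dots, A_{16}$ via a birational morphism $\pi: X \to \bar X$ onto a K3 surface with sixteen nodes. Since $L' \cdot A_j = 0$ on each $A_j \cong \PP^1$, $L'$ descends to a line bundle $\bar L$ on $\bar X$ with $\pi^*\bar L = L'$ and $\bar L^2 = 4t$; ampleness of $\bar L$ follows by Nakai--Moishezon combined with the previous proposition, as any $\bar L$-orthogonal irreducible curve on $\bar X$ would lift to a $(-2)$-curve on $X$ orthogonal to $L'$, hence one already contracted. By Saint-Donat's classical very-ampleness theorem for K3 surfaces with rational double points, $3\bar L$ is very ample on $\bar X$, giving a closed embedding $\phi_{3\bar L}: \bar X \hookrightarrow \PP^N$. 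Thus $\phi_{3L'} = \phi_{3\bar L} \circ \pi$ is birational onto its image, with exceptional locus exactly $A_1' + A_2 + \cdots + A_{16}$. The main obstacle is the first part --- the combinatorial identification of $A_1'$ as irreducible --- which relies essentially on the lattice-theoretic classification of $(-2)$-classes orthogonal to $L'$ from the previous proposition; the rest is then a standard invocation of classical K3 surface theory.
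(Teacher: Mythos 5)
Your first part is correct, and it is in fact a different route from the paper's: the paper proves the present proposition first (working only with the linear system $|3L'|$) and only afterwards deduces irreducibility of $A_{1}'$ as a corollary, using that the contracted points must be nodes since $\rho(X)=17$; you instead get irreducibility up front by decomposing an effective representative of $A_{1}'$, using nefness of $L'$, Hodge index, and the previous proposition's classification of $(-2)$-curves orthogonal to $L'$. That argument is sound and self-contained.

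The gap is in your second part, which is where the actual content of the proposition lies. Two points. (i) You assume the existence of a birational morphism $\pi:X\to\overline{X}$ onto a \emph{projective} $16$-nodal surface before having any semiample linear system in hand. Artin's contraction theorem (or Grauert's, analytically) only produces $\overline{X}$ as an algebraic space (resp. normal analytic space); the standard way to see that the nodal model is a projective surface is precisely to exhibit a base-point-free linear system such as $|3L'|$ contracting the sixteen curves --- the object you are constructing. This can be repaired (Nakai--Moishezon does hold for proper algebraic spaces and then ampleness gives projectivity), but as written the foundational step is silently assumed. (ii) The decisive citation, ``Saint-Donat's classical very-ampleness theorem for K3 surfaces with rational double points,'' is not a theorem of Saint-Donat: his results are about linear systems on \emph{smooth} K3 surfaces, and the usual proof of the RDP version proceeds by pulling back to the minimal resolution and analyzing $|3L'|$ for the big and nef class $L'$ --- i.e.\ exactly the statement to be proved, so as phrased the key step is close to circular. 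The fix is to drop the contraction detour and argue on $X$ itself, as the paper does: $|3L'|$ has no fixed component (by Reid, Chapter on algebraic surfaces, Section 3.8: otherwise $3L'=aE+\Gamma$ with $E^{2}=0$, $E\Gamma=1$, giving $3L'E=1$, impossible since $3L'E\in3\ZZ$), hence is base point free by Saint-Donat, Corollary 3.2; it is not hyperelliptic since $3L'=2C$ with $C^{2}=2$ would force $36t=8$ and $3L'E=2$ is likewise impossible, so $\phi_{3L'}$ is birational; finally the curves contracted are exactly the $(-2)$-curves orthogonal to $L'$, which the previous proposition identifies as $A_{1}',A_{2},\dots,A_{16}$. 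With that substitution (or with (i) and a genuine proof of the singular very-ampleness statement) your argument becomes complete.
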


\begin{proof}
By \cite[Section 3.8]{reid} either $|3L'|$ has no fixed part or
$3L'=aE+\Gamma$, where $|E|$ is a free pencil, and $\Gamma$ is
a $(-2)$-curve with $E\Gamma=1$. However if $E\G=1$, then $3L'E=aE^{2}+1$,
but since $E^{2}=0$ this is impossible. Thus $|3L'|$ has no fixed
part; moreover by \cite[Corollary 3.2]{SD}, it has then no base points.

Let us prove that the morphism $\phi_{3L'}$ has degree one, i.e.
that $|3L'|$ is not hyperelliptic (see \cite[Section 4]{SD}). By
loc. cit., $|3L'|$ is hyperelliptic only if there exists a genus
$2$ curve $C$ such that $3L'=2C$ or there exists an elliptic curve
$E$ such that $(3L')E=2$. Suppose we are in the first case. Since
$C^{2}=2$, one has $9\cdot4t=8$, which is impossible. The second
alternative is also readily impossible. Thus the morphism $\phi_{3L'}$
has degree one. Moreover since $3L'A_{1}'=3L'A_{2}=\dots=3L'A_{16}=0$,
the $16$ divisors are contracted by $\phi_{3L'}$.
\end{proof}
We obtain:
\begin{cor}
Suppose that $\b_{0}$ is even. The divisor $A_{1}'$ is an irreducible
$(-2)$-curve.
\end{cor}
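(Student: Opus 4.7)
The plan is to first establish that $A_1'$ is represented by an effective divisor, then decompose it into irreducible components and use the previous proposition to force the decomposition to be trivial.

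First I would apply Riemann--Roch on the K3 surface $X$: since $A_1'^{2}=-2$ one has $\chi(A_1')=1$, so at least one of $A_1'$ or $-A_1'$ is effective. Because $L$ is nef and $L\cdot A_1'=\b_{0}L^{2}=4t\b_{0}>0$, the class $-A_1'$ cannot be effective. Hence $A_1'$ is represented by an effective divisor, which I then decompose as $A_1'=\sum_{k}m_{k}\G_{k}$ with distinct irreducible curves $\G_{k}$ and multiplicities $m_{k}\geq 1$.

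Next I would control the irreducible components. Since $L'$ is nef, $L'\cdot A_1'=0$, and each $\G_{k}$ is an irreducible effective summand, one must have $L'\cdot\G_{k}=0$ for every $k$. Bigness of $L'$ together with the Hodge index theorem then force $\G_{k}^{2}<0$, and since $X$ is a K3 surface this gives $\G_{k}^{2}=-2$. By the previous proposition, the only $(-2)$-classes orthogonal to $L'$ are $A_1',A_{2},\dots,A_{16}$, so each $[\G_{k}]$ lies in this set.

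Finally I would intersect the decomposition with $L$. Using $L\cdot A_{j}=0$ for $j\geq2$ and $L\cdot A_1'=4t\b_{0}$, the identity
\[
4t\b_{0}\;=\;L\cdot A_1'\;=\;\sum_{k}m_{k}\bigl(L\cdot[\G_{k}]\bigr)
\]
forces exactly one component to have class $A_1'$ with multiplicity one; write it $\G_{1}$. The remaining effective combination $A_1'-\G_{1}=\sum_{j\geq2}n_{j}A_{j}$ is zero in $\NS X)$, and since the $A_{j}$ for $j\geq 2$ are independent in the negative definite Kummer lattice, all $n_{j}$ vanish. Thus $A_1'=\G_{1}$ is an irreducible $(-2)$-curve.

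The argument is essentially bookkeeping once the previous proposition is available; the only mild subtlety is the effectiveness step, which requires the nefness of $L$ to discard the class $-A_1'$. The real work, in which the parity hypothesis on $\b_{0}$ entered, has already been absorbed into the identification of the $(-2)$-classes orthogonal to $L'$.
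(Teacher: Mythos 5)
Your proof is correct, but it follows a genuinely different route from the paper. The paper deduces irreducibility geometrically: it invokes the intermediate proposition that $|3L'|$ defines a degree-one morphism contracting exactly $A_1',A_2,\dots,A_{16}$, and then argues that, the Picard number being $17$, the image of $A_1'$ must be a node, so the contracted divisor $A_1'$ is a single smooth rational curve. You instead argue purely numerically: effectiveness via Riemann--Roch and $L\cdot A_1'>0$ (same first step as the paper), then decompose an effective representative into irreducible components, use nefness of $L'$ to force each component to be $L'$-orthogonal, the Hodge index theorem (only $L'^2=4t>0$ is needed) plus adjunction to see each component is a $(-2)$-curve, the nef-and-big proposition to pin down the possible classes as $A_1',A_2,\dots,A_{16}$, and finally intersection with $L$ together with linear independence of $A_2,\dots,A_{16}$ in the negative definite lattice to kill all extra components. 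What your argument buys is that it bypasses the projective-model machinery entirely (no base-point-freeness, no non-hyperellipticity, no appeal to the Picard number to classify the singularity); what the paper's argument buys is the explicit birational model with $16$ nodes, which it reuses later. One small point of care: the proposition you cite is stated for ``$(-2)$-classes contracted by $L'$,'' which should be read as effective classes (otherwise $-A_j$ would also qualify); since you apply it only to irreducible components of an effective divisor, your use of it is exactly the case its proof establishes, so there is no gap.
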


\begin{proof}
Since $A_{1}'^{2}=-2$ and $LA_{1}'\geq0$, by Riemann-Roch Theorem
we can assume it is effective. Let $B$ be one of the divisors $A_{1}',A_{2},\dots,A_{16}$.
One has $3L'B=0$, thus the linear system $|3L'|$ contracts $B$
to a singular point. Since the Picard number of the K3 surface $X=\Km B)$
is $17$, that singularity must be a node and therefore $A_{1}'$
is irreducible.
\end{proof}

\subsection{\label{subsec:Two-Kummer-structures}Two Kummer structures in case
$\protect\b_{0}$ even and the negative Pell-Fermat equation is not
solvable}

Suppose that $2t$ is not a square and let $(\a_{0},\b_{0})$ be the
fundamental solution of the Pell-Fermat equation 
\begin{equation}
\l^{2}-2t\mu^{2}=1.\label{eq:Pell-Fermat-1}
\end{equation}
We suppose that $\b_{0}$ is even, and we recall that $A_{1}'=\b_{0}L-\a_{0}A_{1}$
in $\NS X)$. The aim of this Section is to prove the following:
\begin{thm}
\label{thm:no automorphisms}Suppose that $t\geq2$ and the negative
Pell-Fermat equation
\begin{equation}
\l^{2}-2t\mu^{2}=-1\label{eq:NegativePell-Fermat}
\end{equation}
has no solution. There is no automorphism $f$ of $X$ sending the
configuration $\mathcal{C}=\sum_{i=1}^{16}A_{i}$ to the configuration
$\mathcal{C}'=A_{1}'+\sum_{i=2}^{16}A_{i}$. 
\end{thm}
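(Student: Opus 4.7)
The plan is to suppose for contradiction that such an automorphism $f$ exists, and to combine the strong Torelli theorem for K3 surfaces with Nikulin's gluing of discriminant forms and a short Pell--Fermat computation, forcing a solution of the negative Pell--Fermat equation and hence a contradiction. The first step is a reduction using the Kummer symmetries of $X$: the group $(\ZZ/2\ZZ)^{4}$ of translations by $2$-torsion points of $B$ descends to automorphisms of $X$ which fix $L$ and act transitively on $\{A_{1},\ldots,A_{16}\}$. After composing $f$ with a suitable Kummer symmetry, one may assume $f(A_{1})=A_{1}'$, so $f^{*}(A_{1}')=A_{1}$ and $f$ permutes $\{A_{2},\ldots,A_{16}\}$ among themselves. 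Since $L'$ is the primitive class in the orthogonal complement of $\mathcal{C}'$ in $\mathrm{NS}(X)$ and $f^{*}$ preserves the nef cone, $f^{*}(L')=L$; inverting the relations $L=\alpha_{0}L'-2t\beta_{0}A_{1}'$ and $A_{1}=\beta_{0}L'-\alpha_{0}A_{1}'$ then yields
\[
f^{*}(L)=\alpha_{0}L-2t\beta_{0}A_{1}=L',\qquad f^{*}(A_{1})=\beta_{0}L-\alpha_{0}A_{1}=A_{1}'.
\]

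By the strong Torelli theorem, $f^{*}$ extends to a Hodge isometry of $H^{2}(X,\ZZ)$; its restriction to the transcendental lattice $T_{X}=T_{B}(2)$ is a Hodge isometry, which for generic $B$ equals $\pm\mathrm{id}$. By Nikulin's gluing theorem, the induced action on the discriminant group $A_{\mathrm{NS}(X)}$ is then also $\pm\mathrm{id}$. A direct calculation using that $\beta_{0}$ is even shows that $f^{*}$ sends the dual class $L^{*}=L/(4t)$ to $\alpha_{0}L^{*}$ in $A_{\mathrm{NS}(X)}$; combining the two facts forces $\alpha_{0}\equiv\pm 1\pmod{4t}$. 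Writing $\alpha_{0}=4tm\pm 1$ and substituting into the Pell--Fermat relation $\alpha_{0}^{2}-2t\beta_{0}^{2}=1$ gives $\beta_{0}^{2}=4m(2tm\pm 1)$; coprimality of the two factors forces $m=u^{2}$ and $2tm\pm 1=v^{2}$. The upper sign yields a positive Pell--Fermat solution $(v,u)$ strictly smaller than $(\alpha_{0},\beta_{0})$, contradicting minimality of the fundamental solution; the lower sign yields $v^{2}-2tu^{2}=-1$, contradicting the hypothesis. Either way one reaches a contradiction.

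The hard part is the discriminant-group step: $\mathrm{NS}(X)$ is an index-two overlattice of $\ZZ L\oplus K$ involving a subtle class built from the Kummer lattice, so one must verify that the order of $L^{*}$ in $A_{\mathrm{NS}(X)}$ is indeed $4t$ (so that the congruence obtained is modulo $4t$ and not a proper divisor thereof), and that Nikulin's gluing isomorphism $A_{\mathrm{NS}(X)}\cong A_{T_{X}}$ faithfully transports the $\pm\mathrm{id}$-action on $A_{T_{X}}$ to the $\pm\mathrm{id}$-action on $A_{\mathrm{NS}(X)}$. The reduction via Kummer symmetries and the final Pell--Fermat computation are then routine.
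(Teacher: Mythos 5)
Your strategy is genuinely different from the paper's (which shows $f$ has finite order $2^{m}$, reduces to an involution, and splits into symplectic/non-symplectic cases analysed via traces, $2$-elementary lattices and the fixed sublattice $\NS X)^{f}$), and parts of it are sound: the reduction to $f^{*}(L')=L$, $f^{*}(A_{1}')=A_{1}$ and hence $f^{*}(L)=L'$, $f^{*}(A_{1})=A_{1}'$ is correct (it is a clean substitute for the paper's Lemma \ref{lem:treize}), and for a very general $X$ the Torelli/gluing step does force $f^{*}$ to act as $\pm\mathrm{id}$ on the discriminant group of $\NS X)$. The gap is exactly at the point you flag as ``the hard part,'' and it cannot be repaired in the form you propose: $L/(4t)$ is \emph{not} an element of $\NS X)^{*}$, so it has no order in $A_{\NS X)}$ at all. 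Indeed, $\NS X)$ is an index-two overlattice of $\ZZ L\oplus K$ containing classes $\Gamma=\alpha L-\sum\beta_{i}A_{i}$ with $\alpha\in\frac{1}{2}\ZZ\setminus\ZZ$ (see Lemma \ref{lem:At most 4 beta} and \cite[Theorem 2.7]{GS}; e.g.\ $\frac{1}{2}(L-A_{1}-A_{2}-A_{3}-A_{4})$ for $t$ even, and such classes are explicitly allowed for in the nefness proofs of the paper for all $t$), and for such $\Gamma$ one has $\frac{L}{4t}\cdot\Gamma=\alpha\notin\ZZ$. The correct dual class to use is $L/(2t)$, whose order in $A_{\NS X)}$ is $2t$, and the congruence your argument actually yields is only $\a_{0}\equiv\pm1\pmod{2t}$.

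With $\a_{0}=2tm\pm1$ the Pell relation gives $\b_{0}^{2}=2m(tm\pm1)$, and coprimality of $m$ and $tm\pm1$ now leaves \emph{three} possibilities, not two: besides $m=2u^{2}$, $tm\pm1=v^{2}$ (which you dispose of as you describe, via minimality or the hypothesis on the negative Pell--Fermat equation), there is the branch $m=u^{2}$, $tm\pm1=2v^{2}$, i.e.\ $2v^{2}-tu^{2}=\pm1$. This branch is not excluded by your hypothesis (which concerns the different equation $\l^{2}-2t\mu^{2}=-1$) nor by minimality, and it is non-vacuous: for $t=3$ one has $(\a_{0},\b_{0})=(5,2)$, so $\a_{0}\equiv-1\pmod{2t}$ while $\a_{0}\not\equiv\pm1\pmod{4t}$, and $2\cdot1^{2}-3\cdot1^{2}=-1$. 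This is precisely the sub-case $\frac{\a_{0}-1}{2d_{0}^{2}}=2$ of the paper, i.e.\ equation \eqref{eq:Relationd0e0}, which the paper eliminates not by arithmetic but by genuinely geometric input: the $15$-nodal quartic model of Proposition \ref{prop:Degre4ModelWith15Nodes} together with the argument of \cite[Proof of Theorem 19]{RS}. So your concluding ``routine Pell--Fermat computation'' cannot close the proof; some additional (geometric) argument is required for this residual branch, and as written the proposal does not prove the theorem.
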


\begin{rem*}
We recall that the Nikulin configurations $\mathcal{C}$ and $\mathcal{C}'$
define two distinct Kummer structures if and only if there is no automorphism
sending $\mathcal{C}$ to $\mathcal{C}'$ (see \cite{HLOY}; a proof
is given in \cite[Proposition 21]{RS}). 
\end{rem*}
In order to prove Theorem \ref{thm:no automorphisms}, let us suppose
that such an automorphism $f$ exists. The group of translations by
the $2$-torsion points on $B$ acts on $X=\Km B)$ and that action
is transitive on the set of curves $A_{1},\dots,A_{16}$. Thus, up
to changing $f$ by $f\circ t$ (where $t$ is such a translation),
one can suppose that the image of $A_{1}$ is $A_{1}'$. Then the
automorphism $f$ induces a permutation of the curves $A_{2},\dots,A_{16}$.
The $(-2)$-curve $A_{1}''=f^{2}(A_{1})=f(A_{1}')$ is orthogonal
to the $15$ curves $A_{i},\,i>1$ and therefore its class is in the
group generated by $L$ and $A_{1}$. Let $\l,\mu\in\ZZ$ such that
$A_{1}''=\l A_{1}+\mu L$, so that $(\l,\m)$ is a solution of the
Pell-Fermat equation \eqref{eq:Pell-Fermat-1}. Let us prove:
\begin{lem}
\label{lem:treize}Let $C=\l A_{1}+\mu L$ be an effective $(-2)$-class.
Then there exists $u,v\in\NN$ such that $C=uA_{1}+vA_{1}'$, in particular
the only $(-2)$-curves in the lattice generated by $L$ and $A_{1}$
are $A_{1}$ and $A_{1}'$.
\end{lem}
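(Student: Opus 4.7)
A direct computation gives $A_1\cdot A_1'=2\a_0$, so working in the rank-two sublattice $\ZZ A_1\oplus\ZZ L$ the condition $C^{2}=-2$ for $C=\lambda A_1+\mu L$ becomes the Pell-Fermat equation $\lambda^{2}-2t\mu^{2}=1$. Since $L$ is big and nef (by the previous subsection) and $C$ is effective, $L\cdot C=4t\mu\geq 0$, whence $\mu\geq 0$. The case $\mu=0$ yields $C=\pm A_{1}$, and since $A_{1}$ is an irreducible $(-2)$-curve, $-A_{1}$ is not effective, so $C=A_{1}$, i.e.\ $(u,v)=(1,0)$.

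Now assume $\mu>0$, so $(|\lambda|,\mu)=(x_{k},y_{k})$ for some $k\geq 1$, where $x_{k}+y_{k}\sqrt{2t}=(\a_{0}+\b_{0}\sqrt{2t})^{k}$. The natural candidates are $v=\mu/\b_{0}$ and $u=\lambda+\a_{0}v$; indeed, by construction $uA_{1}+vA_{1}'=(u-v\a_{0})A_{1}+v\b_{0}L=C$. It remains to prove $u,v\in\NN$. Expanding $(\a_{0}+\b_{0}\sqrt{2t})(x_{k}+y_{k}\sqrt{2t})$ gives the recurrence $y_{k+1}=\a_{0}y_{k}+\b_{0}x_{k}$, and from $y_{1}=\b_{0}$ an induction on $k$ shows $\b_{0}\mid y_{k}$, so $v\in\NN$. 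Multiplying instead by the inverse $\a_{0}-\b_{0}\sqrt{2t}$ yields the key identity $\a_{0}y_{k}-\b_{0}x_{k}=y_{k-1}$. If $\lambda=x_{k}\geq 0$, then $u\geq 0$ is immediate; if $\lambda=-x_{k}<0$, the identity gives $u=(\a_{0}y_{k}-\b_{0}x_{k})/\b_{0}=y_{k-1}/\b_{0}\geq 0$.

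For the ``in particular'' statement, let $\G$ be an irreducible $(-2)$-curve whose class lies in $\ZZ L\oplus\ZZ A_{1}$. By the first part, the class of $\G$ equals $uA_{1}+vA_{1}'$ for some $u,v\in\NN$. Since an irreducible $(-2)$-curve on a K3 surface is the unique effective representative of its class, irreducibility of $\G$ forces $(u,v)\in\{(1,0),(0,1)\}$, giving $\G=A_{1}$ or $\G=A_{1}'$. The main technical point is the non-negativity of $u$ when $\lambda<0$, handled cleanly by the identity $\a_{0}y_{k}-\b_{0}x_{k}=y_{k-1}$.
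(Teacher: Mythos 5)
Your proof is correct and follows essentially the same route as the paper: you express $L=\frac{1}{\b_{0}}(A_{1}'+\a_{0}A_{1})$, reduce to the Pell--Fermat unit structure, and verify that the coefficients $u,v$ are nonnegative integers via the recurrences for $(x_{k},y_{k})$, which is exactly the paper's induction recast through the conjugate identity $\a_{0}y_{k}-\b_{0}x_{k}=y_{k-1}$. The only (harmless) differences are that you treat the case $\l>0$ directly instead of excluding it by $C\cdot A_{1}>0$, and you spell out the ``in particular'' statement, which the paper leaves implicit.
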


\begin{proof}
If $(\l,\mu)$ is a solution of equation \eqref{eq:Pell-Fermat-1},
then so are $(\pm\l,\pm\mu)$. We say that a solution is positive
if $\l\geq0$ and $\mu\geq0$. Let us identify $\ZZ^{2}$ with $\ZZ[\sqrt{2t}]$
by sending $(\l,\mu)$ to $\l+\mu\sqrt{2t}$. The solutions of equation
\ref{eq:Pell-Fermat-1} are units of the ring $\ZZ[\sqrt{2t}]$. Let
$\a_{0}+\b_{0}\sqrt{2t}$ ($\a_{0},\b_{0}\in\NN^{*}$) be the fundamental
solution to equation \eqref{eq:Pell-Fermat-1}. The solutions with
positive coefficients are the elements of the form
\[
\l_{m}+\mu_{m}\sqrt{2t}=(\a_{0}+\b_{0}\sqrt{2t})^{m},\,m\in\NN.
\]
An effective $(-2)$-class $C=\l A_{1}+\mu L$ either equals $A_{1}$
or satisfies $CL>0$ and $CA_{1}>0$, therefore $\mu>0$ and $\l<0$.
Thus if $C\neq A_{1}$, there exists $m$ such that $C=-\l_{m}A_{1}+\mu_{m}L$.
Since $A_{1}'=\b_{0}L-\a_{0}A_{1}$ corresponds to the fundamental
solution of equation \eqref{eq:Pell-Fermat-1}, we have $L=\frac{1}{\b_{0}}(A_{1}'+\a_{0}A_{1})$
and we obtain
\[
C=-\l_{m}A_{1}+\frac{\mu_{m}}{\b_{0}}(A_{1}'+\a_{0}A_{1})=\frac{\mu_{m}}{\b_{0}}A_{1}'+(\frac{\a_{0}}{\b_{0}}b_{m}-\l_{m})A_{1}
\]
and the Lemma is proved if the coefficients $u_{m}=\frac{\mu_{m}}{\b_{0}}$
and $v_{m}=\frac{\a_{0}}{\b_{0}}\mu_{m}-\l_{m}$ are both positive
and in $\ZZ$. Using the fact that 
\[
\l_{m+1}+\mu_{m+1}\sqrt{2t}=(\a_{0}+\sqrt{2t}\b_{0})(\l_{m}+\mu_{m}\sqrt{2t}),
\]
we obtain 
\[
\begin{array}{c}
\l_{m+1}=\a_{0}\l_{m}+2t\b_{0}\mu_{m}\\
\mu_{m+1}=\a_{0}\mu_{m}+\b_{0}\l_{m}
\end{array}.
\]
Then we compute that
\[
u_{m+1}=\frac{\mu_{m+1}}{\b_{0}}=\a_{0}\frac{\mu_{m}}{\b_{0}}+\l_{m},\,\,v_{m+1}=\frac{\a_{0}}{\b_{0}}\mu_{m+1}-\l_{m+1}=\frac{\mu_{m}}{\b_{0}}
\]
and by induction we conclude that $u_{m},v_{m}$ are in $\NN$ for
any $m\geq1$.
\end{proof}
Lemma \ref{lem:treize} implies that $A_{1}''=A_{1}$ i.e. $f$ permutes
$A_{1}$ and $A_{1}'$. Let us continue the proof of Theorem \ref{thm:no automorphisms}:

Since the automorphism $f$ preserves the set 
\[
B=\{A_{1}',A_{1},\dots,A_{16}\},
\]
it acts with finite order $n_{0}$ on $B$. Since $B$ is a $\QQ$-basis
of $\NS X)\otimes\QQ$, the automorphism $f^{n_{0}}$ acts trivially
on $\NS X),$ thus it preserves an ample class, and by \cite[Proposition 5.3.3]{Huyb},
the automorphism $f^{n_{0}}$ has finite order, which proves that
$f$ has finite order. Up to taking an odd power of $f$, one can
suppose that $f$ has order $2^{m}$ for some $m\in\NN^{*}$. Suppose
$m=1$, i.e. $f$ is an involution. Then the integral class
\[
D=\frac{1}{2}(A_{1}+A_{1}')=\frac{\b_{0}}{2}L-\frac{\a_{0}-1}{2}A_{1}
\]
(recall that $\beta_{0}$ is even and $\alpha_{0}$ is odd) is fixed
by $f$. Let us define 
\[
d_{0}=GCD(\b_{0},\a_{0}-1)/2,
\]
then the class $D'=\frac{1}{d_{0}}D$ is primitive in $\NS X)$. We
have 
\[
D'^{2}=\frac{\a_{0}-1}{d_{0}^{2}}\in\ZZ,
\]
and in fact, since $\NS X)$ is an even lattice, $D'^{2}$ is even,
so that $\frac{\alpha_{0}-1}{2d_{0}^{2}}\in\mathbb{Z}$. Let us define
\[
W=\frac{2d_{0}^{2}}{\a_{0}-1}D'.
\]
Let $\NS X)^{f}$ be the sub-lattice of $\NS X)$ fixed by $f$. By
Lemma \ref{lem:At most 4 beta}, for any class $E$ in $\NS X)^{f}$,
there exists $a,b_{2},\dots,b_{16}\in\ZZ$ such that 
\[
E=\frac{1}{2}(aD'+\sum_{i=2}^{16}b_{i}A_{i}).
\]
Since $WD'=2$, we get $WE=a\in\ZZ$, therefore $W$ is an element
of the dual of $\NS X)^{f}$, and the discriminant group of $\NS X)^{f}$
contains the sub-group isomorphic to $\mathbb{Z}/\frac{\alpha_{0}-1}{2d_{0}^{2}}\mathbb{Z}$
generated by the class of $W$. 

\subsection*{Case when $f$ is a non-symplectic involution}

Suppose that $f$ is non-symplectic. Then (see e.g. \cite{AST}) $\NS X)^{f}$
is a $2$-elementary lattice, which means that the discriminant group
of $\NS X)^{f}$ is isomorphic to $(\ZZ/2\ZZ)^{h}$ for some positive
integer $h$. Since $\mathbb{Z}/\frac{\alpha_{0}-1}{2d_{0}^{2}}\mathbb{Z}$
is a sub-group of the discriminant group, and $f$ is supposed to
be non-symplectic, we get two cases:
\[
\frac{\alpha_{0}-1}{2d_{0}^{2}}\in\{1,2\}.
\]

\textit{Sub-case $\frac{\alpha_{0}-1}{2d_{0}^{2}}=1$.} If $\frac{\alpha_{0}-1}{2d_{0}^{2}}=1$,
then $\a_{0}=1+2d_{0}^{2}$. Since $d_{0}|\b_{0}/2$, there exists
$e_{0}\in\ZZ$ such that $\b_{0}=2e_{0}d_{0}$. From the relation
$\a_{0}^{2}-2t\b_{0}^{2}=1$, it follows that $d_{0}$ and $e_{0}$
satisfy the negative Pell-Fermat equation
\[
d_{0}^{2}-2te_{0}^{2}=-1.
\]
Conversely, suppose that $(d_{0},e_{0})$ is the primitive solution
to the equation $d^{2}-2te^{2}=-1$. The fundamental solution to the
Pell-Fermat equation is 
\[
\a_{0}+\b_{0}\sqrt{2t}=(d_{0}+e_{0}\sqrt{2t})^{2}=2d_{0}^{2}+1-2d_{0}e_{0}\sqrt{2t}
\]
 and $\frac{\alpha_{0}-1}{2d_{0}^{2}}=1$. Since, in the hypothesis
of Theorem \ref{thm:no automorphisms}, we supposed that the negative
Pell-Fermat equation has no solution, the case $\frac{\alpha_{0}-1}{2d_{0}^{2}}=1$
is excluded. 
\begin{rem}
In \cite{RS}, we studied the cases with $2t=k(k+1)$. Then $D=2D'=2L-2kA_{1}$,
which gives $d_{0}=1$ and $\a_{0}=2k+1$. The proof of \cite[Theorem 19]{RS}
implies that the negative Pell-Fermat equation $x^{2}-k(k+1)y^{2}=-1$
has no solution for $k\geq2$. 
\end{rem}

\textit{Sub-case $\frac{\alpha_{0}-1}{2d_{0}^{2}}=2$.} If $\frac{\alpha_{0}-1}{2d_{0}^{2}}=2$,
then $\a_{0}=1+4d_{0}^{2}$. Since $d_{0}|\b_{0}/2$, there exists
$e_{0}\in\ZZ$ such that $\b_{0}=2e_{0}d_{0}$. From the relation
$\a_{0}^{2}-2t\b_{0}^{2}=1$, it follows that $d_{0}$ and $e_{0}$
satisfy the relation
\begin{equation}
2d_{0}^{2}-te_{0}^{2}=-1,\label{eq:Relationd0e0}
\end{equation}
(conversely, if $(d_{0},e_{0})$ is a solution of \eqref{eq:Relationd0e0},
then $\a_{0}=1+4d_{0}^{2}$ and $\b_{0}=2e_{0}d_{0}$ is a solution
of the Pell-Fermat equation \eqref{eq:Pell-Fermat-1}). We have 
\[
D'=\frac{1}{d_{0}}D=e_{0}L-2d_{0}A_{1},
\]
with $D'^{2}=4,D'A_{1}=D'A_{1}'=4d_{0},D'A_{j}=0$ for $j\in\{2,\dots,16\}$.
Let us prove that 
\begin{prop}
\label{prop:Degre4ModelWith15Nodes} The divisor $D'$ is nef, the
linear system $|D'|$ is base point free, non hyperelliptic and defines
a morphism $\varphi:X\to\PP^{3}$ such that $\varphi(X)=Y$ is a quartic
surface with $15$ nodes, which are images of the disjoint curves
$A_{j},j\geq2$. 
\end{prop}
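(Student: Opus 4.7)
The plan is to mirror the arguments of Subsection \ref{subsec:The--even} for $L'$ and $3L'$, adapted to the primitive class $D'=e_{0}L-2d_{0}A_{1}$ (whose self-intersection is now only $4$). Four steps must be verified: $D'$ is nef; $|D'|$ is base-point-free and thus defines a morphism $\f_{D'}\colon X\to\PP^{3}$; $\f_{D'}$ is non-hyperelliptic, hence birational onto its image $Y$; and $Y$ is a quartic whose only singularities are the $15$ nodes coming from $A_{2},\dots,A_{16}$.

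For nefness I would run the same case analysis already used for $L'$. Writing a $(-2)$-curve as $\G=uL-\sum v_{i}A_{i}$ and assuming $D'\cdot\G<0$, i.e. $te_{0}u<d_{0}v_{1}$, the identity $te_{0}^{2}=2d_{0}^{2}+1$ (coming from $2d_{0}^{2}-te_{0}^{2}=-1$) combined with $\G^{2}=-2$ yields
\[
\sum_{i\geq 2}v_{i}^{2}<1-\frac{v_{1}^{2}}{te_{0}^{2}}.
\]
Splitting according to whether $v_{1}$ is close to $\a_{0}$ or not, followed by an application of Lemma \ref{lem:At most 4 beta} to control half-integer coefficients, reduces the possibilities to $\G\in\{A_{1},A_{1}'\}$, for which $D'\cdot\G=4d_{0}>0$, a contradiction. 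Running the same computation with $D'\cdot\G=0$ in place of strict inequality shows that the only $(-2)$-classes orthogonal to $D'$ are $A_{2},\dots,A_{16}$.

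Since $D'^{2}=4>0$ and $D'$ is nef, Riemann-Roch together with Kodaira vanishing gives $h^{0}(D')=4$, so $|D'|$ indeed produces a rational map to $\PP^{3}$. For base-point-freeness I would appeal to Reid's description of a fixed component and to Saint-Donat's criterion: both exceptional scenarios would produce a class $E\in\NS X)$ with $D'\cdot E=1$. However, for every $E=aL+\sum b_{i}A_{i}\in\NS X)$ the numbers $2a,2b_{1}$ are integers (Lemma \ref{lem:At most 4 beta}), so
\[
D'\cdot E=4te_{0}a+4d_{0}b_{1}=2(te_{0}(2a)+d_{0}(2b_{1}))\in 2\ZZ,
\]
and $D'\cdot E=1$ is impossible. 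Hence $|D'|$ is base-point-free and $\f_{D'}$ is a morphism everywhere defined.

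The Saint-Donat classification reduces non-hyperellipticity to excluding two cases: $D'=2C$ with $C$ of genus $2$, which fails because $D'^{2}=4$ would force $C^{2}=1$; or the existence of an elliptic class $E$ with $D'\cdot E=2$. This second possibility is the hardest part of the proof: combining $E^{2}=0$ (that is $2ta^{2}=\sum b_{i}^{2}$) with $te_{0}(2a)+d_{0}(2b_{1})=1$ and the parity constraints imposed by $2d_{0}^{2}-te_{0}^{2}=-1$ (which forces $te_{0}$ odd), one should derive a Diophantine contradiction, or show that such $E$ must share a component with one of the $A_{i}$'s, contradicting its irreducibility as an elliptic fibre. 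Once birationality is established, $Y=\f_{D'}(X)\subset\PP^{3}$ is a surface of degree $D'^{2}=4$, i.e. a quartic. Each $A_{j}$ with $j\geq 2$ is contracted by $\f_{D'}$ to a node of $Y$, and by the strict inequality $D'\cdot\G>0$ for every $(-2)$-curve $\G\notin\{A_{2},\dots,A_{16}\}$ established in the nefness step, no further curve is contracted, so $Y$ has exactly $15$ nodes.
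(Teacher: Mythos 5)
Your overall plan follows the paper's strategy, and your base-point-freeness argument (every class $E\in\NS X)$ satisfies $D'\cdot E\in2\ZZ$, so the Saint-Donat/Reid decomposition $D'=3E+\Gamma$ with $E\Gamma=1$ is impossible) is correct and essentially the paper's own parity argument. But the proof is not actually carried out at the decisive point, which you yourself flag: the exclusion of an elliptic class $E$ with $D'\cdot E=2$. Saying that ``one should derive a Diophantine contradiction, or show that such $E$ must share a component with one of the $A_i$'s'' is not a proof, and no simple congruence does the job. The paper's argument here is genuinely quantitative: writing $E=\frac{a}{2}L-\sum\frac{b_i}{2}A_i$ with $a,b_i\in\ZZ$, the conditions $D'E=2$ and $E^2=0$ read $ate_0-b_1d_0=1$ and $2ta^2=\sum b_i^2$; eliminating $a$ and using $te_0^2-2d_0^2=1$ gives the quadratic $(te_0^2-2d_0^2)b_1^2-4d_0b_1+te_0^2S-2=0$ in $b_1$, where $S=\sum_{i\geq2}b_i^2$, whose reduced discriminant is $(1+2d_0^2)(2-S)$. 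Hence $S\in\{0,1,2\}$; the cases $S=1,2$ are excluded by Lemma \ref{lem:At most 4 beta} (too few half-integer coefficients), and $S=0$ fails because $2(1+2d_0^2)\equiv 2\ (\mathrm{mod}\ 4)$ is not a square. Without an argument of this kind, birationality of $\varphi$, and therefore the statement that $Y$ is a quartic with exactly $15$ nodes, is not established.

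A second, smaller gap is in the nefness step. The phrase ``an application of Lemma \ref{lem:At most 4 beta} to control half-integer coefficients'' does not dispose of the one case that this lemma cannot kill: $\Gamma=uL-v_1A_1-\frac12(A_a+A_b+A_c)$ with $u,v_1\in\frac12\ZZ\setminus\ZZ$ has exactly four half-integer coefficients and is perfectly compatible with the lemma. Here $\Gamma^2=-2$ gives $(2v_1)^2-2t(2u)^2=1$ with $2u$ odd, and the contradiction comes from the standing hypothesis that $\b_0$ is even, which by an easy induction forces every solution of $\a^2-2t\b^2=1$ to have even second coordinate; this Pell-parity input must be made explicit. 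Likewise, the reduction of the integer case to $\Gamma\in\{A_1,A_1'\}$ (where $D'\Gamma=4d_0>0$) uses Lemma \ref{lem:treize}, not only Lemma \ref{lem:At most 4 beta}; and to identify the contracted classes you must also keep the case $\Gamma=A_k$, $k\geq2$, which is exactly what produces the $15$ nodes.
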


\begin{proof}
\textit{Let us prove that $D'$ is nef}. Let $\G=\a L-\sum_{i=1}^{16}\b_{i}A_{i}$
be a $(-2)$-curve: 
\begin{equation}
2t\a^{2}-\sum_{i=1}^{16}\b_{i}^{2}=-1\label{eq:minus2}
\end{equation}
where $\a,\b_{i}\in\frac{1}{2}\ZZ$ are subject to the restrictions
in Lemma \ref{lem:At most 4 beta}. Suppose that $D'\G\leq0$, which
is equivalent to
\[
\frac{te_{0}\a}{d_{0}}\leq\b_{1}.
\]
Using this relation in equation \eqref{eq:minus2}, we get 
\[
-1\leq2t\a^{2}-\left(\frac{te_{0}\a}{d_{0}}\right)^{2}-\sum_{i=2}^{16}\b_{i}^{2}.
\]
By using the relation \eqref{eq:Relationd0e0}, this is equivalent
to 
\begin{equation}
-1\leq-\frac{\a^{2}t}{d_{0}^{2}}-\sum_{i=2}^{16}\b_{i}^{2}\,\,\Longleftrightarrow\,\,\frac{\a^{2}t}{d_{0}^{2}}+\sum_{i=2}^{16}\b_{i}^{2}\leq1.\label{eq:LastEqua}
\end{equation}
Suppose that $\a$ is an integer. Then from Lemma \ref{lem:At most 4 beta}
and equation \eqref{eq:LastEqua}, we have either $\forall j\geq2,\,\b_{j}=0$
or $\exists k\geq2,\,\b_{k}=1$ and $\forall j\geq2,\,j\neq k,\,\b_{j}=0$.
In the first case $\G=\a L-\b_{1}A_{1}$ with $\b_{1}\in\ZZ$, and
from Lemma \ref{lem:treize}, either $\G=A_{1}$ or $\G=A_{1}'$.
Since $D'A_{1}=D'A_{1}'=4d_{0}>0$, this is impossible. In the second
case, $\sum_{i=2}^{16}\b_{i}^{2}=1$ implies $\a=0$ and $\G=A_{k}$
for $k\geq2$, and indeed $D'A_{k}=0$. It remains the case when $\a$
is an half-integer. Then three of the $\b_{i}$ with $i>1$ are equal
to $\frac{1}{2}$, the others are $0$, and $\b_{1}$ is in $\frac{1}{2}\ZZ\setminus\ZZ$.
Let $a,b\in\ZZ$ be the odd integers such that $\a=\frac{a}{2},\,\b_{1}=\frac{b}{2}.$
Equation \eqref{eq:minus2} becomes $b^{2}-2ta^{2}=1.$ By hypothesis,
the fundamental solution $(\a_{0},\b_{0})$ of that Pell-Fermat equation
$\a^{2}-2t\b^{2}=1$ is such that $\b_{0}$ is even. Then an easy
induction shows that every solution $(\a,\b)$ is also such that $\b$
is even. Hence that case is also impossible, and we obtain that $D'$
is nef with $D'\G=0$ for a $(-2)$-curve $\G$ if and only if $\G=A_{k}$
for $k\geq2$. 

\textit{Let us prove that the linear system $|D'|=|e_{0}L-2d_{0}A_{1}|$
is base point free.} Suppose that this is not the case. Then (see
\cite{SD}) there exist an elliptic curve $E$ and a $(-2)$-curve
$\G=\a L-\sum_{i=1}^{16}\b_{i}A_{i},\,\,\a,\b_{i}\in\frac{1}{2}\ZZ,$
such that $E\G=1$ and $D'=3E+\G$. One has $D'\G=(3E+\G)\G=1$ but
\[
D'\G=(e_{0}L-2d_{0}A_{1})(\a L-\sum_{i=1}^{16}\b_{i}A_{i})=2(2\a e_{0}t-2\b_{1}d_{0})\in2\ZZ.
\]
This is a contradiction, and we conclude that $|D'|$ is base-point
free.

\textit{Let us study the degree of the morphism defined by the linear
system $|D'|$}. By \cite{SD}, the morphism has degree $2$ if and
only if there exists en elliptic curve 
\[
E=\frac{a}{2}L-\sum_{i=1}^{16}\frac{b_{i}}{2}A_{i},\,\,a,b_{i}\in\ZZ
\]
such that $D'E=2$. Equality $D'E=2$ is equivalent to 
\begin{equation}
ate_{0}-b_{1}d_{0}=1.\label{eq:relationAB}
\end{equation}
Since $E$ is an elliptic curve, one has 
\begin{equation}
2ta^{2}=\sum_{i=1}^{16}b_{i}^{2}.\label{eq:ellCurve}
\end{equation}
Let us define $S=\sum_{i=2}^{16}b_{i}^{2}\in\NN$. The equations \eqref{eq:relationAB}
and \eqref{eq:ellCurve} give
\[
\left\{ \begin{array}{c}
2a^{2}t^{2}=\frac{2}{e_{0}^{2}}(1+b_{1}d_{0})^{2}\\
2t^{2}a^{2}=tb_{1}^{2}+tS
\end{array}\right.,
\]
therefore 
\[
2(1+b_{1}d_{0})^{2}=te_{0}^{2}b_{1}^{2}+te_{0}^{2}S,
\]
 which is equivalent to 
\begin{equation}
(e_{0}^{2}t-2d_{0}^{2})b_{1}^{2}-4d_{0}b_{1}+e_{0}^{2}tS-2=0.\label{eq:complique}
\end{equation}
Since $e_{0}^{2}t-2d_{0}^{2}=1$, the reduced discriminant of equation
\eqref{eq:complique} in $b_{1}$ is 
\[
\Delta=4d_{0}^{2}+2-e_{0}^{2}tS=4d_{0}^{2}+2-(1+2d_{0}^{2})S=(1+2d_{0}^{2})(2-S).
\]
Since $b_{1}$ is an integral solution, that implies $S\in\{0,1,2\}$.
Lemma \ref{lem:At most 4 beta} implies that cases $S=1$ and $S=2$
are not possible. The case $S=0$ is not possible either since $2(1+2d_{0}^{2})$
is not a square. We thus proved that the morphism defined by $|D'|$
has degree $1$ and that concludes the proof of Proposition \ref{prop:Degre4ModelWith15Nodes}.
\end{proof}
Using that model $Y\hookrightarrow\PP^{3}$, we can use the same
reasoning as in \cite[Proof of Theorem 19]{RS} (which was made for
the case $t=3$, with $d_{0}=e_{0}=1$) in order to prove that sub-case
\textit{$\frac{\alpha_{0}-1}{2d_{0}^{2}}=2$} is also impossible if
we suppose that the non-symplectic automorphism $f$ exists.

\subsection*{Case when $f$ is a symplectic involution.}

Suppose $f$ is a symplectic involution. It fixes $A_{1}+A_{1}'$,
it permutes the classes $A_{j}$ ($j>1$) by pairs, thus the number
$s$ of fixed $A_{j}$ is odd. A symplectic automorphism acts trivially
on the transcendental lattice $T_{X}$, which in our situation has
rank $5$. Therefore the trace of $f$ on $H^{2}(X,\ZZ)$ equals $5+\rk\NS X)^{f})\geq6+s>6$.
But the trace of a symplectic involution equals $6$ (see e.g. \cite[Section 1.2]{SvG}).
This is a contradiction, thus $f$ cannot have order $2$ and the
integer $m$ (such that the order of $f$ is $2^{m}$) is larger than
$1$.

\subsection*{Remaining cases}

We know that $f$ has order $2^{m}>1$. The automorphism $g=f^{2^{m-1}}$
has order $2$ and $g(A_{1})=A_{1},\,g(A_{1}')=A_{1}'$, thus $g(L)=L$.
There are curves $A_{i},\,i>1$ such that $f(A_{i})=A_{i}$ (say $s$
of such curves, $s$ is odd since $A_{1}$ is fixed) and the remaining
curves $A_{j}$ are permuted $2$ by $2$ (there are $s'=\frac{1}{2}(15-s)$
such pairs). Let $\mathcal{L}'$ be the sub-lattice generated by $L,A_{1}$
and the fix classes $A_{i}$, $A_{j}+g(A_{j})$. It is a finite index
sub-lattice of the fixed lattice $\NS X)^{g}$ and its discriminant
group is 
\[
\ZZ/4t\ZZ\times(\ZZ/2\ZZ)^{s+1}\times(\ZZ/4\ZZ)^{s'}.
\]
 By the same reasoning as before, the involution $g$ must be symplectic
as soon as $t>1$. However the trace of $g$ is $8+s>6$, thus $g$
cannot be symplectic either. Therefore we conclude that such an automorphism
$f$ does not exist, which conclude the proof of Theorem \ref{thm:no automorphisms}.
\begin{rem}
In the course of the proof of Theorem \ref{thm:no automorphisms},
using the divisor $D'$, we obtained a model of our K3 surface as
a quartic in $\PP^{3}$ with $15$ nodes, as soon as Equation \eqref{eq:Relationd0e0}
has a solution. This is the case for example for $t=3,9,11,19,27$...
\end{rem}

\subsection{When the negative Pell-Fermat equation has a solution}

Suppose that the negative Pell-Fermat equation $\l^{2}-2t\mu^{2}=-1$
has a solution and let $(d_{0},e_{0})$ be the fundamental solution.
Then $(\a_{0},\b_{0})=(1+2d_{0}^{2},2e_{0}d_{0})$ is the fundamental
solution of the Pell-Fermat equation $\l^{2}-2t\mu^{2}=1$, in particular
$\b_{0}$ is even. Moreover we have $A_{1}+A_{1}'=2d_{0}D'$ (we keep
the notation of the previous Section) with 
\[
D'=e_{0}L-d_{0}A_{1}
\]
such that $D'^{2}=2$. Let us prove that 
\begin{prop}
\label{prop:DoublePlaneModel}The divisor $D'$ is nef. We have $D'\G=0$
for a $(-2)$-curve $\G$ if and only if $\G=A_{j},$ for $j\in\{2,\dots,16\}$.
The linear system $|D'|$ is base point free. It defines a double
cover $\varphi:X\to\PP^{2}$ which contracts exactly the $15$ curves
$A_{j},j\geq2$ to $15$ singular points of a sextic curve which is
the union of $6$ lines. \\
The involution $\s$ defined by the double cover $\varphi$ exchanges
$A_{1}$ and $A_{1}'$ and the two Nikulin configurations $\mathcal{C},\mathcal{C}'$,
which therefore give the same Kummer structure. 
\end{prop}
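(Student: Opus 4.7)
The plan is to mirror the proof of Proposition~\ref{prop:Degre4ModelWith15Nodes}, replacing the quartic model by the double-plane model $\varphi=\varphi_{D'}:X\to\PP^{2}$, whose covering involution provides the Kummer-exchange automorphism.

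\emph{Nefness of $D'$.} For a $(-2)$-curve $\G=\a L-\sum\b_iA_i$ with coefficients in $\tfrac{1}{2}\ZZ$ (subject to Lemma~\ref{lem:At most 4 beta}), combining the $(-2)$-relation $2t\a^{2}-\sum\b_i^{2}=-1$ with the hypothesis $D'\G=4te_0\a-2d_0\b_1\leq 0$ and the identity $d_0^{2}-2te_0^{2}=-1$ would yield
\[\sum_{i\geq 2}\b_i^{2}\leq 1-\frac{2t\a^{2}}{d_0^{2}}.\]
I would split on the integrality of $\a$. In the integer case, the bound leaves $\sum_{i\geq 2}\b_i^{2}\leq 1$, and Lemma~\ref{lem:treize} together with $D'A_1=D'A_1'=2d_0>0$ forces $\G=A_j$ for some $j\geq 2$ (the case of $\geq 8$ half-integer $\b_j$'s from Lemma~\ref{lem:At most 4 beta} is ruled out numerically). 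In the half-integer case, writing $\a=a/2,\,\b_1=b/2$ with $a,b$ odd, the bound forces the three additional half-integer $\b_j$'s ($j\geq 2$) to equal $\pm\tfrac{1}{2}$ and reduces $\G^{2}=-2$ to $b^{2}-2ta^{2}=1$, which contradicts the hypothesis that $\b_0$ is even (an induction on $\b_{n+1}=\a_0\b_n+\b_0\a_n$ shows every solution then has even second coordinate).

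\emph{Base-point freeness, the morphism, and the branch sextic.} By Saint-Donat, $|D'|$ is base-point free unless $D'=2E+R$ for a free elliptic pencil $E$ and a $(-2)$-curve $R$ with $ER=1$; then $D'R=0$ forces $R=A_k$ for some $k\geq 2$ by the previous step, and $E=\tfrac{1}{2}(e_0L-d_0A_1-A_k)$. Reducing $d_0^{2}-2te_0^{2}=-1$ modulo~$2$ and~$4$ shows that both $d_0$ and $e_0$ are odd, so $E$ would have exactly three half-integer coefficients, contradicting Lemma~\ref{lem:At most 4 beta}. Hence $|D'|$ is base-point free and, since $D'^{2}=2$ and $D'$ is big, defines a degree-$2$ morphism $\varphi:X\to\PP^{2}$. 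The double-cover formula identifies the branch curve $B$ as a plane sextic, and the $15$ curves $A_j$ ($j\geq 2$) are contracted by $\varphi$ to $15$ distinct nodes of $B$. A partition analysis of $6=d_1+\dots+d_k$, using the bound $\delta\leq\binom{d_i-1}{2}$ on nodes of an irreducible plane curve of degree $d_i$ together with $\binom{6}{2}=15$, shows that the only decomposition of $B$ with exactly $15$ nodes and no further singularities is a union of six lines in general position.

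\emph{The covering involution exchanges $A_1$ and $A_1'$.} The deck involution $\sigma$ preserves $D'$ and each $A_j$ ($j\geq 2$) set-wise, since it fixes each node of $B$. Thus $\sigma(A_1)$ is a $(-2)$-curve orthogonal to $A_2,\dots,A_{16}$ and, by Lemma~\ref{lem:treize}, equals $A_1$ or $A_1'$. To exclude $\sigma(A_1)=A_1$, note first that $A_1$ is disjoint from every $A_j$ ($j\geq 2$) and satisfies $A_1\cdot D'=2d_0\geq 2$, so $A_1$ cannot lie in the ramification (otherwise $\varphi(A_1)$ would be a line, forcing $d_0=1,\,t=1$, where the conclusion is already known by Shioda-Mitani). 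Then $\varphi|_{A_1}$ would be a degree-$2$ cover with exactly two ramification points on $A_1$. Writing $\varphi^{*}B=2(R+\sum_{j\geq 2}A_j)$ with $R=\sum\tilde{\ell}_i$ the reduced ramification, and using $A_1\cdot A_j=0$, I would compute $A_1\cdot R=6d_0\geq 6$; on the other hand, at each of the two $\sigma$-fixed points on $A_1$ (which lie at smooth points of $R$, as $A_1$ avoids the $A_j$) a $\sigma$-equivariant local model forces local intersection multiplicity~$1$, giving $A_1\cdot R=2$, a contradiction. Therefore $\sigma(A_1)=A_1'$, so $\sigma$ carries $\mathcal{C}$ to $\mathcal{C}'$ and the two Nikulin configurations define the same Kummer structure. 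The main technical hurdles are this final intersection-number comparison and the exhaustive half-integer case of the nefness analysis; both exploit the Pell-Fermat data in an essential way.
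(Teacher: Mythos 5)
Your proposal is correct, and for the first three assertions (nefness, base-point freeness, the degree-two map to $\PP^{2}$, and the identification of the branch sextic as six general lines) it follows essentially the same route as the paper: the same reduction of $D'\G\leq0$ to $S+\frac{2t\a^{2}}{d_{0}^{2}}\leq1$, the same case split on the integrality of $\a$ via Lemma \ref{lem:At most 4 beta} and Lemma \ref{lem:treize}, and the same Saint-Donat exclusion of a fixed component through the parity obstruction of Lemma \ref{lem:At most 4 beta} (your remark that $d_{0},e_{0}$ are odd is not even needed there: the coefficient $\frac{1}{2}$ of $A_{k}$ in $E$ already violates the lemma, which is all the paper uses). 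Where you genuinely diverge is the last step. The paper disposes of it in one line, noting that $A_{1}+A_{1}'=2d_{0}D'$ is the pull-back of a rational plane curve of degree $2d_{0}$, whence the deck involution must swap the two components. You instead reduce, via Lemma \ref{lem:treize}, to excluding $\s(A_{1})=A_{1}$, and rule that out by comparing $A_{1}\cdot R=6d_{0}\geq6$ (from $2(R+\sum_{j\geq2}A_{j})=\varphi^{*}B\sim6D'$) with the bound $A_{1}\cdot R\leq2$ furnished by the two fixed points of $\s|_{A_{1}}$ and the equivariant local model. This is longer but arguably more watertight, since the paper's phrasing leaves implicit why the particular member $A_{1}+A_{1}'$ of $|2d_{0}D'|$ is a pull-back with its two components interchanged. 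One small slip in your write-up: if $A_{1}$ were a component of the ramification curve $R$ it would map isomorphically onto a line, giving $2d_{0}=D'\cdot A_{1}=1$, which is impossible outright; it does not \emph{force} $d_{0}=1$, $t=1$, so no appeal to the Shioda--Mitani case is needed. This does not affect the argument.
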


\begin{proof}
\textit{Let us prove that $D'$ is nef}. Let $\G=\a L-\sum_{i=1}^{16}\b_{i}A_{i}$
be a $(-2)$-curve: $2t\a^{2}-\sum_{i=1}^{16}\b_{i}^{2}=-1$, where
$\a,\b_{i}\in\frac{1}{2}\ZZ$ are subject to the restrictions in Lemma
\ref{lem:At most 4 beta}. We suppose that $D'\G\leq0$. This is equivalent
to:
\[
D'\G=(e_{0}L-d_{0}A_{1})(\a L-\sum_{i=1}^{16}\b_{i}A_{i})=4t\a e_{0}-2\b_{1}d_{0}\leq0.
\]
Then $\b_{1}\geq\frac{2t\a e_{0}}{d_{0}}$, thus $-\left(\frac{2t\a e_{0}}{d_{0}}\right)^{2}\geq-\b_{1}^{2}$.
From the relation $2t\a^{2}+1-\b_{1}^{2}=S$, where $S=\sum_{i=2}^{16}\b_{i}^{2}$,
we get
\[
S\leq1+2t\a^{2}-\left(\frac{2t\a e_{0}}{d_{0}}\right)^{2}.
\]
By using the relation $d_{0}^{2}-2te_{0}^{2}=-1$, we obtain
\begin{equation}
S+\frac{2t\a^{2}}{d_{0}^{2}}\leq1.\label{eq:analogLast}
\end{equation}
We can then follow the same proof as for the divisor in Proposition
\ref{prop:Degre4ModelWith15Nodes}, and we conclude that $D'$ is
nef, with $D'\G=0$ for a $(-2)$-curve $\G$ if and only if $\G=A_{k}$
for $k\in\{2,\dots,16\}$.

\textit{Let us prove that the linear system $|D'|=|e_{0}L-d_{0}A_{1}|$
is base point free.} Suppose that this is not the case. Then (see
\cite{SD}) there exist an elliptic curve $E$ and a $(-2)$-curve
$\G$ such that $D'=2E+\G$ and $E\G=1$. Since we have $D'\G=(2E+\G)\G=0$,
there exists $k\geq2$ such that $\G=A_{k}$. Thus $D'=2E+A_{k}$,
and then $e_{0}L-d_{0}A_{1}-A_{k}=2E$, which is impossible by Lemma
\ref{lem:At most 4 beta}. Therefore $|D'|$ is base point free and
defines a double cover of the plane that contracts the $15$ disjoint
$(-2)$-curves $A_{j},j\geq2$ to points. Since $A_{1}+A_{1}'=2d_{0}D'$,
the divisor $A_{1}+A_{1}'$ is the pull-back of a plane rational cuspidal
curve of degree $2d_{0}$, and the involution exchanges the two curves
$A_{1},A_{1}'$. It is well-known that a sextic curve with $15$ singular
points is the union of $6$ lines in general position. 
\end{proof}

\section{Further examples}

\subsection{\label{subsec:An-example-whent odd} An example of a Nikulin configuration
when $\protect\b_{0}$ is odd}

Let us study the $t=4$ case. Then the fundamental solution $(\a_{0},\b_{0})$
equals $(3,1)$. This is the first case with $\beta_{0}$ odd (see
Table in the Appendix). We have 
\[
A_{1}'=L-3A_{1},\,\,L'=3L-8A_{1}
\]
and we already know that $A_{1}'$ is not irreducible. In order to
understand better what is happening, let us define
\[
\begin{array}{c}
A_{1}''=\frac{1}{2}(L-3A_{1}-A_{2}-A_{3}-A_{4})\\
A_{2}''=\frac{1}{2}(L-A_{1}-3A_{2}-A_{3}-A_{4})\\
A_{3}''=\frac{1}{2}(L-A_{1}-A_{2}-3A_{3}-A_{4})\\
A_{4}''=\frac{1}{2}(L-A_{1}-A_{2}-A_{3}-3A_{4})
\end{array},
\]
where the classes $A_{2},A_{3},A_{4}$ are chosen so that the classes
$A_{j}''$ exists in $\NS X)$ (which is possible by \cite[Theorem 2.7]{GS},
since $t=0\text{ mod }2$). We compute that these are $(-2)$-classes
i.e. $A_{j}''^{2}=-2$. Moreover we have 
\[
A_{1}'=2A_{1}''+A_{2}+A_{3}+A_{4}\text{ and }A_{1}''L'=0,
\]
(but $A_{i}''L'\not=0$, for $i=2,3,4$). Let us also define 
\[
L_{1}=3L-4(A_{1}+A_{2}+A_{3}+A_{4}).
\]
We remark that $L_{1}^{2}=L^{2}=16$, $L_{1}A_{i}''=0$ and $A_{i}''A_{j}''=0$
for $i\neq j$ in $\{1,2,3,4\}$.
\begin{lem}
The class $L_{1}$ is big and nef. Moreover if $\G$ is an effective
$(-2)$-class, we have $L_{1}\G\geq0$ and $L_{1}\G=0$ if and only
if $\G$ is one of the classes $A_{1}'',...,A_{4}'',A_{5},\dots,A_{16}$. 
\end{lem}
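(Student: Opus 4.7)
The plan is to adapt the Cauchy--Schwarz arguments from the proofs of Propositions~\ref{prop:Degre4ModelWith15Nodes} and \ref{prop:DoublePlaneModel}. First I would collect the numerical data: with $L^2=16$, $LA_i=0$ and $A_iA_j=-2\delta_{ij}$, a direct computation gives $L_1^2 = 9\cdot 16 - 128 = 16 > 0$ and $L_1\cdot L = 48 > 0$, so $L_1$ lies in the positive cone (since $L$ is big and nef) and is therefore big as soon as it is nef. The orthogonalities $L_1\cdot A_i=0$ for $i\geq 5$ and $L_1\cdot A_j''=0$ for $j\in\{1,2,3,4\}$ are the computations already stated in the paragraph preceding the lemma.

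To prove nefness, let $\Gamma=aL-\sum_{i=1}^{16}b_iA_i$ be an irreducible $(-2)$-curve distinct from all $A_i$; then $b_i\geq 0$, $a\in\tfrac12\ZZ$ with $a\geq 1/2$, $\sum b_i^2=8a^2+1$, and
\[
L_1\cdot\Gamma = 48a - 8(b_1+b_2+b_3+b_4).
\]
Suppose $L_1\cdot\Gamma<0$; then $b_1+b_2+b_3+b_4>6a$, and Cauchy--Schwarz combined with $\sum_{i=1}^4 b_i^2\leq 8a^2+1$ gives $36a^2 < 4(8a^2+1) = 32a^2+4$, hence $a^2<1$ and $a=1/2$. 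But then $\sum b_i^2=3$ and $b_1+b_2+b_3+b_4>3$ with $b_i\in\tfrac12\ZZ$ force $b_1+b_2+b_3+b_4\geq 7/2$, whence $49/4\leq(b_1+\ldots+b_4)^2\leq 4\sum_{i=1}^4 b_i^2\leq 12$, a contradiction. Together with the trivial case $\Gamma=A_i$ (where $L_1\cdot A_i\in\{0,8\}$), this shows $L_1$ is nef, hence big and nef.

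To classify the equality cases, the $a=0$ subcase gives $\Gamma=A_i$ with $L_1\cdot A_i=0$ iff $i\geq 5$. For $a=1/2$ I would write $b_i=c_i/2$ with $c_i\in\ZZ_{\geq 0}$: then $\sum_{i=1}^4 c_i=6$ and $\sum_{i=1}^{16} c_i^2=12$, so the non-negative integer partitions of $6$ with squared sum at most $12$ leave, up to permutation of indices in $\{1,2,3,4\}$, only $(3,1,1,1)$, $(2,2,2,0)$, or $(2,2,1,1)$ completed by two further $c_{k_1}=c_{k_2}=1$ for some $k_1,k_2\geq 5$. The partition $(2,2,2,0)$ gives all $b_i\in\ZZ$ while $a=1/2$, violating Lemma~\ref{lem:At most 4 beta}. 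The $(2,2,1,1)$ case, after subtracting the appropriate $A_j''$, would require $\tfrac12(A_{j'}+A_{j''}+A_{k_1}+A_{k_2})\in\NS X)$ for some $j',j''\in\{1,\ldots,4\}$; but the Kummer code of \cite[Theorem~2.7]{GS} has no weight-$4$ element and this half class carries no $L$-component, so this is impossible. Only $(3,1,1,1)$ remains, giving exactly the four classes $A_1'',A_2'',A_3'',A_4''$.

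Finally, the sixteen classes $A_1'',\ldots,A_4'',A_5,\ldots,A_{16}$ are pairwise disjoint $(-2)$-classes (direct verification), and each $A_j''$ is irreducible: any decomposition would single out a unique irreducible component $C_0$ of $L$-coefficient $1/2$, which by the above classification lies in $\{A_1'',\ldots,A_4''\}$, and then $A_j''-C_0$ would have to be a non-negative integer combination of $A_5,\ldots,A_{16}$, forcing $C_0=A_j''$. Any effective $(-2)$-class $\Gamma$ with $L_1\cdot\Gamma=0$ therefore decomposes into these sixteen pairwise disjoint irreducible curves as $\Gamma=\sum n_\ell C_\ell$, and $\Gamma^2=-2$ combined with pairwise disjointness forces $\sum n_\ell^2=1$, so $\Gamma$ equals one of the sixteen. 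The main obstacle is the exclusion of the $(2,2,1,1)$ partition, which cannot be ruled out by Lemma~\ref{lem:At most 4 beta} alone and requires the finer index-two over-lattice description of $\NS X)$ from \cite[Theorem~2.7]{GS}.
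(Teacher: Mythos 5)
Your overall strategy is the same as the paper's: Cauchy--Schwarz against the relation $\sum b_i^2=8a^2+1$ to bound $a$, then the lattice constraints of Lemma~\ref{lem:At most 4 beta} and the structure of the Kummer code to eliminate the surviving half-integral candidates. Most of it is correct, and in one place it is more careful than the paper: your explicit exclusion of the class $\tfrac12 L-A_{j_1}-A_{j_2}-\tfrac12(A_{j_3}+A_{j_4}+A_{k_1}+A_{k_2})$ via the absence of weight-$4$ words in the Kummer code fills in a subcase that the paper's proof passes over silently (Lemma~\ref{lem:At most 4 beta} alone does not rule it out, since exactly four of its coefficients are half-integral). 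Your reduction of nefness to irreducible curves, and the final decomposition argument identifying every contracted effective $(-2)$-class with one of the sixteen pairwise disjoint irreducible curves, are likewise cleaner than testing nefness directly on effective classes with an unjustified ``$b_i\geq0$''.

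There is, however, one genuine gap: the classification of the equality cases omits $a=1$. Your Cauchy--Schwarz estimate yields $a^2<1$ only under the strict inequality $b_1+\cdots+b_4>6a$; when $L_1\cdot\Gamma=0$ the same computation gives $36a^2\leq 32a^2+4$, i.e.\ $a^2\leq1$, so $a=1$ survives and must be treated (the paper devotes the bulk of its proof to this case). Fortunately the equality version of your own estimate disposes of it in one line: $a=1$ forces $b_1+\cdots+b_4=6$ and $\sum_{i=1}^{16}b_i^2=9$, so
\[
36=(b_1+\cdots+b_4)^2\leq 4\sum_{i=1}^{4}b_i^2\leq 4\sum_{i=1}^{16}b_i^2=36,
\]
with equality throughout, whence $b_1=\cdots=b_4=\tfrac32$ and $b_i=0$ for $i\geq5$; this class has integral $a$ and exactly four half-integral $b_j$, which Lemma~\ref{lem:At most 4 beta} forbids. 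With that case added, your argument is complete.
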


\begin{proof}
Let 
\[
\Gamma=aL-\sum_{i=1}^{16}b_{i}A_{i}
\]
be an effective $(-2)$-class (thus $\sum b_{i}^{2}-8a^{2}=1$). One
has 
\[
L_{1}\G\leq0
\]
 if and only if 
\[
6a\leq b_{1}+b_{2}+b_{3}+b_{4}.
\]
Suppose $\G\not\in\{A_{5},\dots,A_{16}\}$. Then $a>0,\,b_{i}\geq0$
and equation $L_{1}\G\leq0$ is equivalent to 
\[
a^{2}\leq\frac{1}{36}(b_{1}+\dots+b_{4})^{2}.
\]
Using 
\[
(b_{1}+\dots+b_{4})^{2}\leq4(b_{1}^{2}+\dots+b_{4}^{2})\leq4(b_{1}^{2}+\dots+b_{16}^{2})
\]
we get 
\[
a^{2}\leq\frac{1}{9}(b_{1}^{2}+\dots+b_{16}^{2})
\]
and since $\sum b_{i}^{2}=8a^{2}+1$, we have
\[
a^{2}\leq\frac{1}{9}(8a^{2}+1),
\]
thus $a^{2}\leq1$ and $a\in\{\frac{1}{2},1\}$. Suppose that $a=\frac{1}{2}$.
Then $\sum_{i=1}^{i=16}b_{i}^{2}=3$ and either there are $12$ $b_{i}$'s
equal to $\frac{1}{2}$ or (up to permutation of the indices) $b_{1}=\frac{3}{2}$,
$b_{2}=b_{3}=b_{4}=\frac{1}{2}$. The first case is impossible since
$3=6a>b_{1}+b_{2}+b_{3}+b_{4}$. The second case corresponds to $A_{1}'',\dots,A_{4}''$,
and then $L_{1}A_{j}''=0$. \\
It remains to study the case $a=1$, then $\sum b_{i}^{2}=9$ and
\[
6\leq b_{1}+b_{2}+b_{3}+b_{4}.
\]
That implies $b_{i}\leq\frac{5}{2}$. Up to permutation we can suppose
that the largest $b_{i}$ with $i\in\{1,2,3,4\}$ is $b_{1}$. Suppose
$b_{1}=\frac{5}{2}$, then $\sum_{i\geq2}b_{i}^{2}=\frac{11}{4}$
and $b_{2}+b_{3}+b_{4}\geq\frac{7}{2}$. One can suppose that $b_{2}$
is the largest among $b_{2},b_{3},b_{4}$, then there are two cases
: $b_{2}=2$ or $b_{2}=\frac{3}{2}$. The first case is impossible
since one would obtain $\sum_{i\geq2}b_{i}^{2}>\frac{11}{4}$. Suppose
$b_{2}=\frac{3}{2}$, then $b_{3}^{2}+b_{4}^{2}=\frac{1}{2}$ and
$b_{3}+b_{4}\geq2$, but this is also impossible. \\
Suppose that $b_{1}=2$. Then $\sum_{i\geq2}b_{i}^{2}=5$ and $b_{2}+b_{3}+b_{4}\geq4$.
The largest $b_{i}$ among $b_{2},b_{3},b_{4}$ (say it is $b_{2}$)
is $2$ or $\frac{3}{2}$. If $b_{2}=2$, then $b_{3}^{2}+b_{4}^{2}=1$
and $b_{3}+b_{4}\geq2$, which is impossible. If $b_{2}=\frac{3}{2}$,
then $b_{3}+b_{4}\geq\frac{5}{2}$ and $b_{3}^{2}+b_{4}^{2}=\frac{11}{4},$thus
$b_{3}=\frac{3}{2}$ and $b_{4}\geq1$ gets a contradiction. \\
It remains $b_{1}=\frac{3}{2}$, but then $b_{2}=b_{3}=b_{4}=\frac{3}{2}$.
That implies $b_{j}=0$ for $j\geq5$. But $L-\frac{3}{2}(A_{1}+A_{2}+A_{3}+A_{4})$
is not in the Néron-Severi group of the surface (see Lemma \ref{lem:At most 4 beta}).
\\
We thus proved that the only effective $(-2)$-classes $\G$ such
that $L_{1}\G\leq0$ are $A_{1}'',\dots,A_{4}'',A_{5},\dots,A_{16}$
and moreover $L_{1}\G=0$ for these classes. Thus $L_{1}$ is nef
and big. 
\end{proof}
As before, one can prove that the linear system $3L_{1}$ define a
degree $1$ morphism which contracts $A_{1}'',\dots,A_{4}'',A_{5},\dots,A_{16}$
onto singularities. Since we assume that the Kummer surface is generic,
it has Picard number $17$ and we conclude that the divisors $A_{1}'',\dots,A_{4}''$
are irreducible. Therefore:
\begin{cor}
The $16$ $(-2)$-curves $A_{1}'',\dots,A_{4}'',A_{5},\dots,A_{16}$
form a Nikulin configuration $\mathcal{C}'$ on the K3 surface $X$.
The $(-2)$-class $A_{1}'$ is not irreducible and $A_{1}'=2A_{1}''+A_{2}+A_{3}+A_{4}$.
\end{cor}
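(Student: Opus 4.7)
The plan is to assemble the corollary from the preceding lemma together with two direct linear-algebra verifications in $\NS X)$. First I would record the pairwise intersection numbers: from the definitions,
\[
A_i''A_j''=\tfrac{1}{4}\bigl(L^{2}-2(3+3)+(\text{overlap of the }A_\bullet\text{ coefficients})\bigr)
\]
and a plain expansion gives $A_i''^{2}=-2$ and $A_i''A_j''=0$ for $i\neq j$ in $\{1,2,3,4\}$, while $A_i''A_k=0$ for any $k\geq 5$ (since $A_k$ appears with coefficient $0$ in $A_i''$ and is orthogonal to $L$ and to $A_1,\dots,A_4$). These checks are routine but must be written out to confirm the orthogonality part of the Nikulin configuration property.

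Next I would invoke the preceding lemma, which shows that $L_{1}$ is big and nef and that the only effective $(-2)$-classes contracted by $L_{1}$ are precisely $A_{1}'',\dots,A_{4}'',A_{5},\dots,A_{16}$. Following the same strategy used earlier for $3L'$ in the $\beta_{0}$-even case (Section \ref{subsec:The--even}), the linear system $|3L_{1}|$ has no fixed part (otherwise $3L_1 = aE+\Gamma$ with $E\Gamma=1$ forces $E^2=0$ to give $3L_1E = 1$, impossible) and no base points by \cite[Corollary 3.2]{SD}; one then rules out the hyperelliptic alternatives from \cite{SD} using $L_{1}^{2}=16$ and the fact that no elliptic class can have intersection $2$ with $L_{1}$. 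Thus $\phi_{3L_{1}}$ is birational onto its image, and it contracts exactly the $16$ classes above. Since $X$ is generic with $\rho(X)=17$, each connected component of the exceptional locus is a single smooth rational curve (a node on the image), so each of $A_{1}'',\dots,A_{4}''$ is represented by an irreducible smooth rational $(-2)$-curve. Together with the orthogonality in the first step this produces the Nikulin configuration $\mathcal{C}'$.

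Finally, the identity $A_1' = 2A_1'' + A_2 + A_3 + A_4$ is just the computation
\[
2A_1'' + A_2 + A_3 + A_4 = (L - 3A_1 - A_2 - A_3 - A_4) + A_2 + A_3 + A_4 = L - 3A_1 = A_1',
\]
and the non-irreducibility of $A_1'$ was already established in Subsection \ref{subsec:The--odd} (where it was shown that $\beta_0$ odd forces $\frac{1}{2}(A_1+A_1')\notin\NS X)$ if $A_1'$ were irreducible). The present decomposition exhibits $A_1'$ explicitly as an effective non-prime divisor, consistent with that obstruction.

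The main obstacle is the irreducibility step: verifying the base-point-freeness and non-hyperellipticity of $|3L_{1}|$ requires being careful with the half-integral coefficients permitted by Lemma \ref{lem:At most 4 beta}, since the $A_i''$ themselves lie in $\frac{1}{2}\ZZ\setminus\ZZ$. Once those potential elliptic and genus-$2$ pencils are excluded exactly as in the $\beta_0$ even case, the rest is bookkeeping.
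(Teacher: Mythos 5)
Your proposal is correct and follows essentially the same route as the paper: the paper also deduces irreducibility of $A_{1}'',\dots,A_{4}''$ by noting (``as before'') that $|3L_{1}|$ defines a degree-one morphism contracting exactly the sixteen classes singled out by the preceding lemma, and then invoking $\rho(X)=17$ to force nodes, while the orthogonality relations, the identity $A_{1}'=2A_{1}''+A_{2}+A_{3}+A_{4}$, and the non-irreducibility of $A_{1}'$ (from the $\beta_{0}$ odd proposition) are recorded exactly as you do. You merely write out the fixed-part/base-point/hyperelliptic checks that the paper compresses into a reference to the earlier $3L'$ argument.
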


\begin{rem}
i) One can check that the class $L'$ is big and nef; the image of
$X$ by the linear system $|3L'|$ is a surface with $12$ nodal singularities
and one $D_{4}$ singularity obtained by contracting $A_{1}'',A_{2},A_{3},A_{4}$.
\\
ii) We do not know yet if $\mathcal{C}'$ is another Kummer structure
on the Kummer surface $X$, we intend to study that problem in a forthcoming
paper.
\end{rem}

\subsection{\label{subsec:An-example-with}An example of a Nikulin configuration
when $2t$ is a square }

Let us consider the case $t=2$ i.e. $A$ is a $(1,2)$-polarized
abelian surface. Then $2t$ is a square and the method in Section
\ref{subsec:The--even} does not apply. We start by recalling the
following 
\begin{rem}
Since $t$ is even, by \cite[Theorem 2.7 and Remark 2.10]{GS}, up
to re-labelling the $16$ curves $(-2)$-curves $A_{j}$, we can suppose
that the classes 
\[
\begin{array}{cc}
F_{1}=\frac{1}{2}(L-A_{1}-A_{2}-A_{3}-A_{4}),\,\,\, & F_{2}=\frac{1}{2}(L-A_{5}-A_{6}-A_{7}-A_{8}),\,\,\,\\
F_{3}=\frac{1}{2}(L-A_{9}-A_{10}-A_{11}-A_{12}), & F_{4}=\frac{1}{2}(L-A_{13}-A_{14}-A_{15}-A_{16})
\end{array}
\]
are contained in $\NS X)$. For $j\in\{1,...,4\}$, we define 
\[
B_{j}=F_{2}-A_{j}
\]
and for $j\in\{5,...,8\}$, we define 
\[
B_{j}=F_{1}-A_{j}.
\]
These are $(-2)$-classes; they are effective since $LB_{j}>0$. We
check moreover that 
\[
B_{j}B_{k}=-2\delta_{jk},
\]
where $\delta_{jk}$ is the Kronecker symbol. Let us prove the following
result:
\end{rem}

\begin{prop}
\label{prop:NikulinConfFor2tA Square}The classes $B_{1},\dots,B_{8},A_{9},\dots,A_{16}$
are $16$ disjoint $(-2)$-curves.
\end{prop}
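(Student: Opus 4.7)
The plan is to prove the claim in three stages: verification of the intersection numbers, effectiveness of each $B_j$, and irreducibility of each $B_j$; the first two are routine while the third is the main obstacle. For the intersection numbers, using $L^2 = 4t = 8$, $LA_i = 0$, and $A_i A_j = -2\delta_{ij}$, direct expansion yields $F_i^2 = 0$, $F_1 F_2 = 2$, $F_1 A_k = 1$ exactly for $k\in\{1,\ldots,4\}$, $F_2 A_k = 1$ exactly for $k\in\{5,\ldots,8\}$, and $F_i A_k = 0$ in all other cases; the relations $B_j^2 = -2$, $B_j B_k = 0$ for $j \ne k$, and $B_j A_k = 0$ for $k \in \{9,\ldots,16\}$ follow immediately (the first two are already recorded in the excerpt). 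For effectiveness, $L B_j = L F_i - L A_j = \tfrac{1}{2}L^2 = 4 > 0$ together with nefness of $L$ gives $h^0(-B_j) = 0$, and Riemann--Roch on the K3 surface $X$ then forces $h^0(B_j) \geq 1$.

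For irreducibility, by the symmetry swapping $F_1 \leftrightarrow F_2$ and $\{A_1,\ldots,A_4\} \leftrightarrow \{A_5,\ldots,A_8\}$, it suffices to treat $B_1 = F_2 - A_1$. Let $\sum n_i \Gamma_i$ be any effective divisor in $|B_1|$ decomposed into distinct irreducible components $\Gamma_i$ with $n_i \in \NN^*$. By Lemma \ref{lem:At most 4 beta} every class in $\NS X)$ has the form $\alpha L - \sum \beta_i A_i$ with $\alpha, \beta_i \in \tfrac{1}{2}\ZZ$, so $L\Gamma_i = 8\alpha \in \{0,4,8,\ldots\}$; moreover, on the generic Kummer surface the only irreducible curves with $LC = 0$ are the $A_k$, since $\alpha = 0$ reduces $C$ to a non-negative integer combination of the $A_k$, and irreducibility then forces $C = A_k$ for some $k$. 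Since $LB_1 = 4$, the decomposition must take the form $B_1 = \Gamma + \sum_k c_k A_k$, where $\Gamma$ is a single irreducible component with $L\Gamma = 4$ (hence $\alpha = \tfrac{1}{2}$, multiplicity one) and $c_k \in \NN$.

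Writing $\Gamma = \tfrac{1}{2}L - \sum \beta_i A_i$ and using $\Gamma A_k = 2\beta_k = B_1 A_k + 2c_k$ together with $B_1 A_1 = 2$, $B_1 A_k = 1$ for $k \in \{5,\ldots,8\}$, and $B_1 A_k = 0$ otherwise, one obtains $\beta_1 = 1 + c_1$, $\beta_k = \tfrac{1}{2} + c_k$ for $k \in \{5,\ldots,8\}$, and $\beta_k = c_k$ for $k \in \{2,3,4,9,\ldots,16\}$. Let $g$ be the arithmetic genus of $\Gamma$; from $\Gamma^2 = 2g - 2 \geq -2$ and $\Gamma^2 = \tfrac{1}{4}L^2 - 2\sum \beta_i^2 = 2 - 2\sum \beta_i^2$, one gets $\sum \beta_i^2 = 2 - g$. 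But
\[
\sum \beta_i^2 \geq (1 + c_1)^2 + \sum_{k=5}^{8}\left(\tfrac{1}{2} + c_k\right)^2 \geq 1 + 4 \cdot \tfrac{1}{4} = 2,
\]
with equality if and only if every $c_k$ vanishes. Hence $g = 0$, all $c_k = 0$, and $B_1 = \Gamma$ is irreducible. The subtle point, and the real obstacle in the argument, is that a priori a component of $B_1$ could have positive arithmetic genus; this is exactly what the presence of the genus $g$ in the equality condition above rules out.
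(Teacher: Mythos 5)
Your argument is correct, but it takes a genuinely different route from the paper's. The paper proves irreducibility geometrically: it shows that $F_1$ (and likewise $F_2$) is nef with base point free linear system, hence defines an elliptic fibration $\psi_1\colon X\to\PP^1$ with connected fibres, observes that the twelve divisors $C_k+A_k$ ($k\in\{5,\dots,16\}$, $C_k=F_1-A_k$) are distinct singular fibres with $A_kC_k=2$, and then uses $e(X)=24$ together with the Kodaira classification to force each of these fibres to have Euler number $2$, i.e.\ to be of type $I_2$; this makes each $C_k$, in particular each $B_k=C_k$ for $k\in\{5,\dots,8\}$, irreducible, and the fibration $\psi_2$ handles $B_1,\dots,B_4$. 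You instead argue purely lattice-theoretically: since $L\Gamma\in 4\ZZ_{\geq 0}$ for every irreducible curve and $LB_1=4$, any member of $|B_1|$ has exactly one component $\Gamma$ with $L\Gamma=4$, taken with multiplicity one, all remaining components being among the $A_k$; the resulting relations $\beta_1=1+c_1$, $\beta_k=\frac{1}{2}+c_k$ for $k\in\{5,\dots,8\}$, $\beta_k=c_k$ otherwise, combined with $\sum\beta_i^2=2-p_a(\Gamma)\leq 2$, force all $c_k=0$ and $p_a(\Gamma)=0$, so $B_1=\Gamma$ is an irreducible $(-2)$-curve. Your approach is more elementary and self-contained (it only needs Lemma \ref{lem:At most 4 beta}, Riemann--Roch and the nefness of $L$), while the paper's approach yields extra geometric information as a by-product, namely the elliptic fibration with twelve $I_2$ fibres and the irreducibility of all twelve classes $C_k$, which is exploited in the remark following the proposition (Mordell--Weil rank). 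One small point to tighten in your write-up: the claim that the only irreducible curves $C$ with $LC=0$ are the $A_k$ is not justified by saying that ``$\alpha=0$ reduces $C$ to a non-negative integer combination of the $A_k$'', since classes with $\alpha=0$ may well have half-integer coefficients $\beta_j$; the correct argument is that an irreducible curve satisfies $C^2\geq -2$, whereas by Lemma \ref{lem:At most 4 beta} a class with $\alpha=0$ and some $\beta_j\in\frac{1}{2}\ZZ\setminus\ZZ$ has at least eight half-integer coefficients and hence $C^2\leq -4$; thus all $\beta_j$ are integers, $C^2=-2$ leaves a single $\beta_j=\pm 1$, and effectivity gives $C=A_j$.
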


\begin{proof}
We have $B_{k}A_{j}=0$ for $k\in\{1,\dots,8\}$ and $j\in\{9,\dots,16\}$.
It remains to prove that $B_{1},\dots,B_{8}$ are irreducible. We
compute that $F_{1}^{2}=0=F_{2}^{2}$, $F_{1}F_{2}=2$. Since $LF_{1}=4,$
the divisor $F_{1}$ is effective. The fact that $F_{1}$ is nef can
be found in \cite[Proposition 4.6]{GS}, but for completeness, let
us prove it here. Suppose that $F_{1}$ is not nef. Then there exist
a $(-2)$-curve $\G$ such that $F_{1}\G=-a<0$. By \cite[Remark 8.2.13]{Huyb},
the divisor $E=F_{1}-a\G$ is effective, moreover 
\[
E^{2}=0,\,E\G=a,\text{ (and }F_{1}=E+a\G\text{).}
\]
 Let us write $E=\a L-\sum\b_{i}A_{i}$ with $\a\in\frac{1}{2}\ZZ,\,\text{and }\a>0$
since $E$ is effective and the lattice $L^{\perp}$ is negative definite.
Since $L$ is nef and
\[
4=F_{1}L=8\a+aL\G
\]
with $a>0$, that forces $\a=\frac{1}{2}$ and $L\G=0$. Thus $\G$
is one of the curves $A_{j}$. But for these curves $F_{1}A_{j}\in\{0,1\}$
is not negative, a contradiction. Therefore $F_{1}$ is nef and the
linear system $|F_{1}|$ has no base points. We prove similarly that
all the linear system $|F_{k}|,\,k\in\{1,2,3,4\}$ have no base points,
and then defines a fibration $\psi_{k}:X\to\PP^{1}$. Since $F_{1}A_{j}=1$
for $j\in\{1,...,4\}$, the fibration $\psi_{1}$ has connected fibers.
By the same kind of argument so is $\psi_{2}$. For $k\in\{5,...,16\}$,
let us define 
\[
C_{k}=F_{1}-A_{k}
\]
(so that in fact $B_{k}=C_{k}$ for $k\in\{5,...,8\}$). The divisor
$C_{k}$ is an effective $(-2)$-class and the $12$ divisors 
\[
C_{k}+A_{k},\,k\in\{5,\dots,16\}
\]
are distinct singular fibers of $\psi_{1}$, with $A_{k}C_{k}=2$.
 We now use \cite[Proposition 11.4, Chapter III]{BHPVdV}: the Euler
characteristic of $X$ (equal to $24$) is the sum $\sum_{s}e(f_{s})$
of the Euler numbers of all the singular fibers. By the Kodaira classification
of singular fibers of elliptic fibrations (see e.g. \cite[Table 3, Chapter V, Section 7]{BHPVdV}),
the reducible fibers $f_{s}=C_{k}+A_{k}$ for $k\geq5$ satisfy $e(f_{s})\geq2$.
Moreover, by the above cited Table, a singular fiber $f_{s}$ containing
a smooth rational curve satisfies $e(f_{s})=2$ if and only if it
is the union of two $(-2)$-curves $D_{1},D_{2}$ with $D_{1}D_{2}=2$
and meeting transversally. Computing the Euler characteristic of $X$,
we see that necessarily $e(C_{k}+A_{k})=2$, for $k\in\{5,\dots,16\}$
and therefore the curves $B_{k}=C_{k}$, $k\in\{5,...,8\}$ are irreducible
$(-2)$-curves. We proceed in a similar way with $\psi_{2}$ for the
curves $B_{k}$ with $k\in\{1,...,4\}$, and we thus obtain the claimed
result.
\end{proof}

\begin{rem}
i) By the Proposition \ref{prop:NikulinConfFor2tA Square}, we see
that the elliptic fibration defined by $F_{1}$ contains $12$ fibers
of type $I_{2}$. By general results on elliptic K3 surfaces, the
rank $\rho$ of the Néron-Severi group is $14=12+2$ plus the rank
of the Mordell-Weil group, which is the group generated by the zero
section (we can take $A_{1}$ as zero section) and the sections of
infinite order. Since we know that $\rho=17$ we get that the rank
of the Mordell-Weil group is three. That group contains the disjoint
sections $A_{2}$, $A_{3}$ and $A_{4}$. The remark is similar for
the fibration defined by $F_{2}$. \\
ii) On the K3 surface $X$ we have two Nikulin configurations 
\[
\mathcal{C}=\sum_{i=1}^{16}A_{i},\,\mathcal{C}'=\sum_{i=1}^{8}B_{i}+\sum_{i=9}^{16}A_{i}.
\]
We do not know if these configurations define two Kummer structures
on $X$. We intend to come back on the subject later. \\
iii) It is also possible to check that the divisor $L'=3L-2(A_{1}+\dots+A_{8})$
is big and nef, $L'^{2}=8$ and $L'\G=0$ for an effective $(-2)$-class
$\G$ if and only if $\G$ is in $\{B_{1},\dots,B_{8},A_{9},\dots,A_{16}\}$. 

\end{rem}

\section*{Appendix }

\subsection*{\label{subsec:Why-it-was}Why it was natural to study the case $t=\frac{1}{2}k(k+1)$
in the paper \cite{RS}.}

Since $\a^{2}=1+2t\b^{2}$, the integer $\a$ is odd. Let $k\in\NN$
be such that $\a=2k+1$ (then one has $A_{1}A_{1}'=4k+2$). The integer
$\b$ is then solution of the equation
\[
(2k+1)^{2}-2t\b^{2}=1,
\]
which is equivalent to
\[
t\b^{2}=2k(k+1).
\]
Then 
\[
a=2t\b,\,b=2k+1
\]
are solutions of the three conditions in \eqref{eq:tree conditions}.
Since $a^{2}=2t(b^{2}-1)$, one gets 
\begin{equation}
a^{2}=2t\cdot4k(k+1).\label{eq:square}
\end{equation}
Thus $2t\cdot4k(k+1)$ must be the square of an integer and it is
therefore natural to define
\[
t=\frac{1}{2}k(k+1).
\]
Then one computes easily that $a=2k(k+1)$ and $\b=2$. Then one has
\[
GCD(\b,\a_{0}-1)=GCD(2,2k)=2,
\]
thus as soon as $\a_{0}>5,$ i.e. $k>2$, one can apply Theorem \ref{thm:no automorphisms}.
That were the cases we studied in \cite{RS}.

\subsection*{A table}

We resume in the following table the fundamental solutions $(\a{}_{0},\b{}_{0})$
of the Pell-Fermat equation $\alpha^{2}-2t\beta^{2}=1$ for $2t\leq60$.
Recall that there are non-trivial solutions if and only if $2t$ is
not a square. Observe that when $2t=k(k+1)$ the minimal solution
is $(2k+1,2)$, these correspond to Nikulin configurations studied
in the paper \cite{RS}, we put a $*$ close to these cases. We put
a box around the cases with $\beta_{0}$ odd, and a prime $'$ when
$\b_{0}$ is even but such that the negative Pell-Fermat equation
has a solution: these cases are left out in this paper. 

\begin{center}
\begin{table}[ht] 
\caption{Fundamental solutions of the Pell-Fermat equations}\label{tabelle_Pell} 
	\centering 
\begin{tabular} {c|c|c|c|c|c|c|c|c|c|c|c|c|c|c|c}  

\hline 
$2t$&2*&4&6*&\boxed{8}&10'&12*&14&16&18&20*&22&\boxed{24}&26'&28&30*\\ 
	\hline 
$\alpha_0$&3&-&5&3&19&7&15&-&17&9&197&5&51&127&11\\	 	
\hline 
$\beta_0$&2&-&2&1&6&2&4&-&4&2&42&1&10&24&2\\ 

\hline 
\hline 
\hline
$2t$&\boxed{32}&34&36&38&\boxed{40}&42*&44&46&\boxed{48}&50'&52&54&56*&58'&60\\ 
\hline 
$\alpha_0$&17&35&-&37&19&13&199&24335&7&99&649&485&15&19603&31\\ 
\hline 
$\beta_0$&3&6&-&6&3&2&30&3588&1&14&90&66&2&2574&4\\ 
\hline 
\end{tabular}  
\end{table} 
\end{center}

 \vspace{5mm}
\noindent Xavier Roulleau,
\\Aix-Marseille Universit\'e, CNRS, Centrale Marseille,
\\I2M UMR 7373,  
\\13453 Marseille,
\\France
\\ \email{Xavier.Roulleau@univ-amu.fr}
\vspace{0.3cm} \\
%
\noindent 
\author{Alessandra Sarti} \\
\address{	Universit\'e de Poitiers\\
Laboratoire de Math\'ematiques et Applications,\\
UMR 7348 du CNRS, \\
TSA 61125	
\\11 bd Marie et Pierre Curie, 		\\
	86073 POITIERS Cedex 9, \\France}\\ 
\email{Alessandra.Sarti@math.univ-poitiers.fr} \\
\begin{verbatim}
http://www.i2m.univ-amu.fr/perso/xavier.roulleau/Site_Pro/Bienvenue.html
http://www-math.sp2mi.univ-poitiers.fr/~sarti/

\end{verbatim}

\end{document}